\title{Localization and Gluing of Orbifold Amplitudes: The Gromov-Witten Orbifold Vertex}
\author{Dustin Ross}
\address{Dustin Ross, Colorado State University, Department of Mathematics, Fort Collins, CO 80523-1874, USA}
\email{ross@math.colostate.edu}
\newcommand{\ord}{\text{ord}}
\newcommand{\proj}{\mathbb{P}}
\newcommand{\R}{\mathbb{R}}
\newcommand{\N}{\mathbb{N}}
\newcommand{\Z}{\mathbb{Z}}
\newcommand{\aut}{\mathrm{Aut}}
\newcommand{\M}{\overline{\mathcal{M}}}
\newcommand{\so}{\mathcal{O}}
\newcommand{\Q}{\mathbb{Q}}
\newcommand{\C}{\mathbb{C}}
\newcommand{\X}{\mathcal{X}}
\newcommand{\B}{\mathcal{B}}
\newcommand{\La}{\mathcal{L}}
\newcommand{\Y}{\mathcal{Y}}
\newcommand{\F}{\mathcal{F}}
\newcommand{\fc}{1\hspace{-.1cm}\text{I}}
\newcommand{\G}{\mathcal{G}}
\newtheorem{dummy}{}[section]
\newtheorem{lemma}[dummy]{Lemma}
\newtheorem{theorem}[dummy]{Theorem}
\newtheorem{conjecture}[dummy]{Conjecture}
\theoremstyle{definition}
\newtheorem{definition}[dummy]{Definition}
\newtheorem{remark}[dummy]{Remark}
\begin{document}

\subjclass[2010]{Primary 14n35. Secondary  05a15.}

\begin{abstract}
We define a formalism for computing open orbifold GW invariants of $[\C^3/G]$ where $G$ is any finite abelian group.  We prove that this formalism and a suitable gluing algorithm can be used to compute GW invariants in all genera of any toric CY orbifold of dimension $3$.  We conjecture a correspondence with the DT orbifold vertex of Bryan-Cadman-Young.
\end{abstract}

\maketitle
\tableofcontents

\pagebreak

\section{Introduction}\label{sec:intro}

\subsection{Summary of Results}

In \cite{akmv:tv}, Aganagic, Klemm, Mari\~no and Vafa proposed an algorithm for computing Gromov-Witten invariants in all genera of smooth toric Calabi-Yau 3-folds.  The basic building block for their algorithm is the topological vertex, a generating function of open GW invariants which they predict via large N duality.  Recent work has proven the validity of the topological vertex and the corresponding algorithm (\cite{lllz:mttv},\cite{mnop:gwdt1},\cite{orv:qcycc},\cite{moop:gwdtc}).  

This work develops an analogous algorithm for computing GW invariants in all genera of toric CY \textit{orbifolds} of dimension 3.  We now paraphrase the results herein.\\



\noindent \textbf{Definition \ref{def:vertex}:} We define the \textit{GW Orbifold Topological Vertex} via open GW invariants of $[\C^3/G]$ where $G$ is any finite abelian group.\\

The open orbifold GW invariants in Definition \ref{def:vertex} are defined via localization, generalizing the methods of Katz and Liu (\cite{kl:oinv}).  The technical heart of our work is the proof that the GW orbifold vertex glues.\\

\noindent\textbf{Theorem \ref{gluing}:} The GW theory of toric CY orbifolds of dimension 3 is determined by the GW orbifold vertex and a suitable gluing algorithm.\\

Given a planar trivalent graph $\Gamma=\{\text{vertices}, \text{edges}\}$ associated to the target orbifold, the gluing  algorithm has the following form:
\[
Z=\sum_{\Lambda} \prod_{\text{edges}} E(e,\Lambda) \prod_{\text{vertices}} V(v,\Lambda)
\]
where $Z$ is the GW potential of the target, $V(v,\Lambda)$ is the GW orbifold vertex and $E(e,\Lambda)$ consists simply of an automorphism correcting combinatorial factor and a sign which depends on the geometry of the target near $e$.  The sum is over all possible ways of assigning decorated partitions to the edges (Section \ref{sec:edge}).


We conjecture that the GW orbifold vertex developed herein is equivalent to the DT orbifold vertex developed in \cite{bcy:otv} after a change of variables and a change of basis in an associated Fock space.  To provide justification for the conjecture, we turn to the example of $[\C^3/\Z_2]$.  In Section \ref{sec:a1all}, we compute explicit expressions for the one-leg vertex along either the gerby or a non-gerby leg.  In Section \ref{ssec:a1ef} we prove the following.\\


\noindent\textbf{Theorem \ref{effectivecor}:} We formulate and prove the GW/DT vertex correspondence for the effective, one-leg $\Z_2$ vertex.\\

The ineffective case turns out to be both more interesting and more challenging due to the nontrivial isotropy.  We prove the following in Section \ref{ssec:a1in}.\\

\noindent\textbf{Conjecture \ref{conjgwdtin}/Theorem \ref{gwdtin}} We formulate the GW/DT vertex correspondence for the ineffective, one-leg $\Z_2$ vertex and show that it holds in winding 1, in winding 2 with non-negative Euler characteristic, and in winding 3 with positive Euler characteristic.\\

These checks provide a highly nontrivial justification for the correspondence.  In order to prove Theorem \ref{gwdtin}, a new identity for higher genus $\Z_2$-Hodge integrals is required.  Utilizing localization techniques, we prove the identity in Appendix \ref{sec:hhi}.\\

\noindent\textbf{Theorem \ref{hhi}:}
\[
\sum_{i,j,g,n}\int_{M_{g;n,0(\B\Z_2)}}\hspace{-1cm}\lambda_{g-i}\hat\lambda_{\hat g}\hat\lambda_{\hat g-j}\psi^{i+j-1}\frac{x^{n-1}}{(n-1)!}\lambda^{2g-1}=\frac{1}{2}\csc\left(\frac{\lambda}{2}\right)\tan\left(\frac{x}{2}\right)
\]
where $\lambda_i:=c_i(\mathbb{E}_{1})$, $\hat{\lambda}_i:=c_i(\mathbb{E}_{-1})$, $\psi$ is pulled back from $\M_{g,n}$, and $\hat g:=g-1+\frac{n}{2}$.\\

Finally, reverse-engineering Conjecture \ref{conjgwdtin}, we obtain new predictions for certain tautological intersections on the moduli space of twisted curves.\\

\noindent\textbf{Predictions:} Assuming Conjecture \ref{conjgwdtin} in winding $2$, we predict closed forms for generating functions of certain genus 1, 2, and 3 $\Z_2$-Hodge integrals which arise in the $\Z_2$ vertex (Section \ref{sec:pred}).\\

\subsection{Context and Motivation}

GW invariants of a smooth complex manifold $X$ virtually count maps $f:\Sigma\rightarrow X$ where $\Sigma$ is a compact Riemann surface.  In case $X$ is toric, the Atiyah-Bott localization theorem can be used to reduce the computation of GW invariants to the evaluation of an associated graph sum which can, in principle, be computed.  However, the latter computation is often so combinatorially dense that it becomes infeasible to extract any meaningful structure of the invariants.

In \cite{akmv:tv}, Aganagic, Klemm, Mari\~no, and Vafa proposed a method for significantly reducing the combinatorial complexity of the computations when the target is a toric CY 3-fold.  Heuristically, this works as follows.  To each toric CY 3-fold, one can associate a planar trivalent graph, vertices correspond to torus invariant points and edges correspond to torus invariant lines.  Aganagic, Klemm, Mari\~no, and Vafa suggested `cutting' each edge of the graph with a Lagrangian submanifold, leaving a collection of trivalent vertices.  They proposed that the GW invariants of the 3-fold can be parsed into contributions coming from each vertex, and they defined the \textit{topological vertex} to be the contribution of a single trivalent vertex.  They suggested that the GW theory of any toric CY 3-fold can be obtained from the topological vertex and a suitable gluing algorithm.


The `cutting' of the target by Lagrangians naturally suggests the study of \textit{open} GW invariants, i.e. virtual counts of maps from Riemann surfaces with boundary.  Using large N duality from Chern-Simons theory, Aganagic, Klemm, Mari\~no, and Vafa gave explicit predictions for the topological vertex which they claimed to be the open GW theory of $\C^3$ relative to three special Lagrangian submanifolds.  As open GW theory had not yet been mathematically developed, there was no way to verify these predictions at the time.

In \cite{kl:oinv}, Katz and Liu proposed a definition of open GW invariants via localization under certain hypotheses on the target.  In particular, their theory applies to define open GW invariants of toric 3-folds.  An important aspect of the theory developed by Katz and Liu is that different choices of torus action result in different open invariants.  In \cite{df:lgta}, Diaconescu and Florea implemented the methods of Katz and Liu to compute the open invariants of $\C^3$ relative to the Lagrangian submanifolds prescribed by \cite{akmv:tv}.  Diaconescu and Florea then compared their open GW invariants to those predicted by \cite{akmv:tv} and made highly nontrivial checks to justify their theory as a realization of the topological vertex.

Another realization of the topological vertex was developed by Li, Liu, Liu, and Zhou in \cite{lllz:mttv}.  They defined open invariants via (formal) relative GW theory and showed that the resulting vertex recovers the predictions of \cite{akmv:tv} in the one and two-leg cases.  The full three-leg case was left as an open question.  Simply comparing formulas, it is not hard to see that the topological vertex computations of \cite{df:lgta} and \cite{lllz:mttv} are equivalent where the torus action dependency of the former corresponds to the framing dependency of the latter.

In parallel, Okounkov, Reshetikhin, and Vafa defined generating functions for 3d partitions with prescribed asymptotics along the axes (\cite{orv:qcycc}).  They proved that these generating functions are equivalent (up to a normalization factor) to the topological vertex of \cite{akmv:tv}.  In particular, the work of \cite{orv:qcycc} made a connection with Donaldson-Thomas theory -- it was shown via localization in \cite{mnop:gwdt1} that the generating functions of \cite{orv:qcycc} are a natural building block for DT invariants of toric CY 3-folds.  When the GW/DT correspondence for toric 3-folds was proven by Maulik, Oblomkov, Okounkov, and Pandharipande in \cite{moop:gwdtc}, one consequence was the validity of the full three-leg topological vertex of \cite{akmv:tv}.

In recent years, GW and DT invariants have been defined for orbifold targets.  In \cite{bcy:otv}, Bryan, Cadman, and Young defined the DT orbifold topological vertex of $[\C^3/G]$ as the generating function of 3d partitions colored by irreducible representations of $G$ (naturally generalizing the generating functions of \cite{orv:qcycc}).  They proved that the DT orbifold vertex is a basic building block for computing the DT invariants of any effective toric CY orbifold of dimension 3.  

This work takes the next step along this path, developing the analogous orbifold vertex formalism in GW theory.  In doing so, we point out connections to existing theories and applications to conjectural correspondences which will be important motivation for future work.

One possible application of this formalism is to Ruan's crepant resolution conjecture.  The CRC is an extension of the classical Mckay correspondence.  Roughly speaking, the conjecture claims that the GW theory of a Gorenstein orbifold $\X$ and its crepant resolution $Y$ are equivalent after analytic continuation (\cite{cr:qcacr}).  In \cite{bg:crc}, Bryan and Graber conjectured that if $\X$ is hard-Lefshetz, then the equivalence can be witnessed through a change in formal variables and analytic continuation of the GW potentials.

In \cite{cr:ogwcrc}, it was shown that if $\X=[\C^3/\Z_2]$, then the one-leg orbifold vertex associated to $\X$ can be obtained from the open GW invariants of the crepant resolution $\mathcal{K}_{\proj^1}\oplus\mathcal{O}_{\proj^1}$ under a suitable substitution of formal coordinates.  This is the first example of a possible \textit{open} CRC.  It was also shown that this change of variables is compatible with gluing and this was used to deduce the Bryan-Graber formulation of the crepant resolution conjecture for the orbifold $[\so_{\proj^1}(-1)\oplus\so_{\proj^1}(-1)/\Z_2]$.  One hope is that these methods can be generalized to reduce the toric CRC to the case of the orbifold vertex.

Another application is to the orbifold GW/DT correspondence.  One might naturally ask if the DT orbifold vertex of Bryan, Cadman, and Young is equivalent to the GW orbifold vertex defined herein.  The particular case of $[\C^3/\Z_2]$ is the subject of Section \ref{sec:gwdt}.  A future objective is to better understand the connection between these two vertex theories.  A recent paper of Zong \cite{z:gmvf} has generalized the Mari\~no-Vafa formula to the orbifold setting.  One particular consequence is an evaluation of the GW one-leg $A_n$ vertex where the Lagrangian is placed along one of the non-gerby legs.  It would be interesting to compare this evaluation to the analogous DT computation appearing in \cite{bcy:otv}.  It would also be interesting to investigate whether the methods of Zong can be generalized to evaluate the two or three-leg GW orbifold vertex.  We leave these endeavors for future work.

\subsection{Organization of the Paper}

Section \ref{sec:prelim} recalls many of the ideas which will be pivotal throughout.  In particular, we recall basic facts about toric orbifold lines and open GW invariants.  Section \ref{sec:vertex} contains the computations needed for defining the orbifold vertex.  Section \ref{sec:gluing} describes the gluing algorithm and concludes with the proof that the gluing algorithm recovers the GW invariants of any toric CY 3-fold.  Section \ref{sec:insertions} briefly describes how to modify the setup to include insertions.  Section \ref{sec:examples} provides a few of the known examples.  In particular, Section \ref{sec:smooth} recalls the basic facts about the correspondence of the different vertex theories in the smooth case.  Section \ref{sec:apps} gives applications.  In particular, we discuss the conjectural correspondence with the DT orbifold vertex.  Appendix \ref{sec:hhi} is devoted to proving Theorem \ref{hhi}.

\subsection{Acknowledgements} 
A great deal of gratitude is owed to my advisor Renzo Cavalieri.  His guidance, expertise, and enthusiasm made this work possible.  Many thanks also to A. Brini, P. Johnson, C.-C. Liu, and K. Monks for helpful conversation and suggestions.  I would also like to thank the referee for his/her helpful comments and suggestions.\\

\section{Preliminaries}\label{sec:prelim}

\subsection{Toric Calabi-Yau Orbifolds}\label{sec:orbifolds}
By a \textit{Calabi-Yau orbifold} we mean a smooth, quasi-projective Deligne-Mumford stack over $\C$ with trivial canonical class.  We do not require the isotropy of the generic point to be trivial.  A toric Calabi-Yau orbifold is defined to be such a stack with the action of a Deligne-Mumford torus $\mathcal{T}=T\times\mathcal{B}G$ having an open dense orbit isomorphic to $\mathcal{T}$ (c.f. \cite{fmn:stdms}).  To a toric CY orbifold $\X$ of dimension three we can associate a planar graph $\Gamma_{\X}=\{\text{Edges, Vertices}\}$ where the vertices correspond to torus fixed points and the edges correspond to torus invariant lines.  Following \cite{bcy:otv}, we make the following definition.

\begin{definition}\label{orient}
Let $\Gamma$ be a trivalent planar graph with a chosen planar representation.  An \textit{orientation} of $\Gamma$ is a choice of direction for each edge and an ordering of the edges incident to each vertex which is compatible with the counterclockwise cyclic ordering.
\end{definition}

\subsection{The Target Space $[\C^3/G]$}\label{sec:target}

We set up notation here that will be used throughout.  Locally near a torus invariant point of a toric CY 3-fold, the space can be modelled as a global quotient $\left[\C^3/G\right]$ where $G$ preserves the coordinates and acts trivially on the volume form.  We allow $G$ to be any finite abelian group.  More specifically, if $G=\Z_{n_1}\times...\times\Z_{n_l}$, then the action of $G$ on $\C^3$ can be described with \textit{weights} $(\vec{\alpha}^1,\vec{\alpha}^2,\vec{\alpha}^3)\in G^3$ summing to $0$ where the generator $\epsilon_i$ of $\Z_{n_i}$ acts as 
\[
\epsilon_i\cdot (z_1,z_2,z_3)=(e^{\frac{2\pi \sqrt{-1}\alpha_i^1}{n_i}}z_1,e^{\frac{2\pi \sqrt{-1}\alpha_i^2}{n_i}}z_2,e^{\frac{2\pi \sqrt{-1}\alpha_i^3}{n_i}}z_3).
\]
Define
\[
g_i=\text{lcm}\left\{ \frac{n_j}{\gcd\left( \alpha_j^i,n_j \right)}:j=1,...,l \right\}
\]
Then $\Z_{g_i}$ is the effective part of the $G$ action along the $i$th coordinate axis.

We define three Lagrangian suborbifolds $\La_1,\La_2,\La_3$ inside $\left[\C^3/G\right]$ as follows.  We can view $\left[\C^3/G\right]$ as the neigborhood of the (image of) zero in the global quotient 
\begin{equation}\label{doub}
\left[ \so(-1)\oplus\so(-1)/G \right]
\end{equation}
where $z_1$ is the coordinate in the base direction.  Define an anti-holomorphic involution on $\so(-1)\oplus\so(-1)$
\[
\sigma(z_1,z_2,z_3)=(1/\overline{z_1},\overline{z_1}\overline{z_3},\overline{z_1}\overline{z_2}).
\]
One checks that $\sigma$ descends to an involution $\sigma_G$ on the quotient (\ref{doub}).  $\La_1$ is defined to be the fixed locus of $\sigma_G$.  $\La_2$ and $\La_3$ are defined analogously.

The GW orbifold vertex is defined in Definition \ref{def:vertex} to be the oriented open GW potential of a formal neighborhood of the coordinate axes in $\left[\C^3/G\right]$, relative to the Lagrangian $\La:=\La_1\cup \La_2\cup \La_3$.  


\subsection{Toric Orbifold Lines}\label{sec:orbiline}

In order to prove the gluing formula in Section \ref{sec:proof}, we recall some basic facts about toric orbifold lines, i.e. orbifolds with coarse space $\proj^1$.

It follows from the classification theorem of \cite{fmn:stdms} that any toric orbifold line (with finite abelian stabilizers) is an abelian gerbe over a football $\proj_{n_0,n_{\infty}}^1$, and any such orbifold can be constructed via successive root constructions over the football.  Recall that the $n$th root construction of a line bundle $L$ on a space $X$ is defined as the fibered product:

\[\begin{CD}
X^{(L,n)}   	@>>>          \B\C^*       \\ 
@V\psi VV                             @VV\lambda\rightarrow\lambda^nV\\ 
X                  @>L>>      \B\C^*
\end{CD}\]

\noindent where the bottom map classifies the line bundle $L$.  The top map classifies a line bundle $M$ on $X^{(L,n)}$ with $M^{\otimes n}=\psi^*L$.  We denote $M$ by $L^{1/n}$ and refer to it as the $n$th root of $L$.

Generalizing this notion, if $L_1,...,L_l$ are line bundles on $X$ and $n_1,...,n_l\in\mathbb{N}$, then $X^{\mathbf{(L_i,n_i)}}$ can be defined as the fiber product

\[\begin{CD}
X^{\mathbf{(L_i,n_i)}}   	@>>>          \B\C^*\times...\times\B\C^*       \\ 
@V\psi VV                             @VV\lambda_i\rightarrow\lambda_i^{n_i}V\\ 
X                  @>>>      \B\C^*\times...\times\B\C^*
\end{CD}\]

The orbifold Picard group of $X^{\mathbf{(L_i,n_i)}}$ is generated by bundles pulled back from $X$ via $\psi$ along with the $n_i$th root of $L_i$.

We will be concerned with the case when $X$ is the football $\proj_{n_0,n_{\infty}}^1$.  The line bundles on $\proj_{n_{0},n_{\infty}}^1$ are of the form
\begin{equation}\label{line}
L:=\so\left( \frac{a}{n_0}[0]+\frac{b}{n_{\infty}}[\infty]+c \right).
\end{equation}
The \textit{numerical degree} of $L$ is defined to be the Chern-Weil class of the bundle (c.f. \cite{cr:nctoo}).  In this case, we compute $\deg(L)=\frac{a}{n_0}+\frac{b}{n_{\infty}}+c$.  We also make the following definition which will be useful in the computations of Section \ref{sec:gluing}.

\begin{definition}\label{def:int}
Any line bundle on $\proj_{n_0,n_{\infty}}^1$ can be written uniquely in the form (\ref{line}) with $0\leq a\leq n_0-1$ and $0\leq b\leq n_{\infty}-1$.  Given such a representation, we define $\text{Int}(L):=c$.
\end{definition}

Suppose that $L_i=\so\left( \frac{a_i}{n_0}[0]+\frac{b_i}{n_{\infty}}[\infty]+c_i \right)$.  The line bundles on $\proj_{n_0,n_{\infty}}^{\mathbf{(L_i,n_i)}}$ are of the form
\begin{equation}\label{line2}
\psi^*\left(\so\left( \frac{a}{n_0}[0]+\frac{b}{n_{\infty}}[\infty]+c \right)\right)\otimes\left( L_1^{1/n_1} \right)^{m_1}\otimes...\otimes\left( L_l^{1/n_l} \right)^{m_l}.
\end{equation}
We denote such a line bundle by $\so\left( a,b,c;m_1,...,m_l \right)$.  The numerical degree of $\so\left( a,b,c;m_1,...,m_l \right)$ is 
\[
\frac{a}{n_0}+\frac{b}{n_{\infty}}+c+\sum_{i=1}^{l}\frac{m_i}{n_i}\left(\frac{a_i}{n_0}+\frac{b_i}{n_{\infty}}+c_i\right).
\]

An orbifold line bundle contains the information of a representation of the isotropy on the fibers.  For example, in the case of the bundle (\ref{line}) on $\proj_{n_0,n_{\infty}}^1$, the generator of $\Z_{n_0}$ acts on the fiber over $0$ as $\frac{a}{n_0}$ and the generator of $\Z_{n_{\infty}}$ acts on the fiber over $\infty$ as $\frac{b}{n_{\infty}}$ (we have identified $S^1$ with $\R/\Z$).

Following section 2.1.5 of \cite{j:egwtods}, we now give a concrete description of the isotropy of $\proj_{n_0,n_{\infty}}^{\mathbf{(L_i,n_i)}}$ as well as how the groups act on the fibers of the orbifold line bundle $\so\left( a,b,c;m_1,...,m_l \right)$.  Over any point other than $0$ or $\infty$, the isotropy is simply $\Z_{n_1}\times...\times\Z_{n_l}$ and the generator of $\Z_{n_i}$ acts on the fiber of $\so\left( a,b,c;m_1,...,m_l \right)$ by $\frac{m_i}{n_i}$.  Over $0$, the isotropy group $G_0$ is an extension of $\Z_{n_0}$ by $\Z_{n_1}\times...\times\Z_{n_l}$:
\[
0\rightarrow \Z_{n_1}\times...\times\Z_{n_l}\rightarrow G_0\rightarrow \Z_{n_0}\rightarrow 0.
\]
Similarly for the isotropy $G_{\infty}$.  The extension $G_0$ can be determined by the 2-cocycle $\gamma_0:\Z_{n_0}\times\Z_{n_0}\rightarrow \Z_{n_1}\times...\times \Z_{n_l}$ defined by
\[
\gamma_0(r,r')=\begin{cases}
(a_1,...,a_l) &r+r'\geq n_0\\
0 &r+r'<n_0.
\end{cases}
\]
Explicitly, as a set $G_0$ consists of $(l+1)$-tuples $(r,s)$ with $r\in \Z_{n_0}$, $s\in\Z_{n_1}\times...\times\Z_{n_l}$.  The operation is defined by 
\[
(r,s)+(r',s'):=(r+r',s+s'+\gamma_0(r,r')).
\]
On the fiber of $\so(a,b,c;m_1,...,m_l)$ over $0$, we compute that $(1,(0,...,0))\in G_0$ acts as 
\[
\frac{a}{n_0}+ \sum_{i=1}^l\frac{m_ia_i}{n_in_0}
\]
while $(0,(0...,1_i,...,0))\in G_0$ acts as 
\[
\frac{m_i}{n_i}.
\]
Similarly for the action of $G_{\infty}$ on the fiber over $\infty$.

When a rank $2$ bundle $N$ over $\proj_{n_0,n_{\infty}}^{(L_i,n_i)}$ splits as the sum of two line bundles, the Calabi-Yau condition reduces to 
\[
N=\so(a,b,c;m_1,...,m_k)\oplus\so(-a-1,-b-1,-c;-m_1,...,-m_l).
\]

Finally, the following lemma gives us a characterization of when certain torus fixed maps from a football to $\proj_{n_0,n_{\infty}}^{(L_i,n_i)}$ exist.

\begin{lemma}\label{condition}
Suppose $\mathcal{C}$ is an orbifold with coarse space $\proj^1$ and orbifold structure only over $0$ and $\infty$.  Suppose $f:\mathcal{C}\rightarrow \proj_{n_0,n_{\infty}}^{(L_i,n_i)}$ is a $\C^*$ fixed map of degree $d$ twisted at $0$ by $(k_0^0,(k_1^0,...,k_l^0))\in G_0$ and at $\infty$ by $(k_0^{\infty},(k_1^{\infty},...,k_l^{\infty}))\in G_{\infty}$.  Then
\[
k_0^0=k_0^{\infty}=d
\]
and
\[
\hspace{2cm}\frac{dc_i}{n_i}-\frac{k_i^0}{n_i}-\frac{k_i^{\infty}}{n_i}\in\Z \hspace{1cm}\forall i\geq 1.
\]
\end{lemma}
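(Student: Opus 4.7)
My plan is to deduce both assertions from the fact that any orbifold line bundle on the source football $\mathcal{C}=\proj^1_{r_0,r_\infty}$ has the form $\so\!\left(\tfrac{a}{r_0}[0]+\tfrac{b}{r_\infty}[\infty]+c\right)$ with $c\in\Z$, hence satisfies
\[
(\text{numerical degree})-(\text{action at }0)-(\text{action at }\infty)\in\Z.
\]
I apply this congruence to pullbacks along $f$ of well-chosen line bundles on the target.

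For the first equality, I analyze the $\C^*$-fixed map in uniformizing charts near $0$. Composing with the structure map $\pi\colon\proj_{n_0,n_\infty}^{(L_i,n_i)}\to\proj_{n_0,n_\infty}^1$, the local model is a monomial $z\mapsto z^m$ between charts $[\C/\Z_{r_0}]\to[\C/\Z_{n_0}]$, with $m=dr_0/n_0$ by matching coarse-space degrees. The induced homomorphism $\Z_{r_0}\to\Z_{n_0}$ sends the generator to $e^{2\pi im/r_0}=e^{2\pi id/n_0}$, which is the $d$-th power of the generator of $\Z_{n_0}$. Hence $k_0^0=d$, and symmetrically $k_0^\infty=d$.

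For the integrality statement, pull back $L_i^{1/n_i}$ along $f$. Since $\pi^* L_i=(L_i^{1/n_i})^{\otimes n_i}$,
\[
\deg\bigl(f^*L_i^{1/n_i}\bigr)=\frac{d}{n_i}\!\left(\frac{a_i}{n_0}+\frac{b_i}{n_\infty}+c_i\right).
\]
The explicit $G_0$-action from the excerpt---$(1,\vec 0)$ acts on the fiber of $L_i^{1/n_i}$ at $0$ by $\tfrac{a_i}{n_in_0}$ and $(0,e_j)$ by $\tfrac{\delta_{ij}}{n_j}$---extends linearly to give action $\tfrac{da_i}{n_in_0}+\tfrac{k_i^0}{n_i}$ at $0$ and $\tfrac{db_i}{n_in_\infty}+\tfrac{k_i^\infty}{n_i}$ at $\infty$. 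Subtracting these from the numerical degree yields exactly $\tfrac{dc_i}{n_i}-\tfrac{k_i^0}{n_i}-\tfrac{k_i^\infty}{n_i}$, which the congruence declares to be an integer. The main bookkeeping subtlety is the 2-cocycle defining $G_0$: the decomposition $(k_0^0,\vec k^0)=k_0^0(1,\vec 0)+\sum_j k_j^0(0,e_j)$ used in the linear extension holds literally in $G_0$ for representatives $k_0^0\in\{0,\ldots,n_0-1\}$, since iterating $(1,\vec 0)$ fewer than $n_0$ times never triggers $\gamma_0$.
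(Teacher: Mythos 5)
You should know at the outset that the paper does not prove this lemma itself: its ``proof'' is a citation of Lemmas II.12 and II.13 of \cite{j:egwtods}, so there is no in-paper argument to compare against. Your overall strategy --- get the first claim from the local model of the map over the underlying football, and the second from the integrality of $(\text{degree})-(\text{action at }0)-(\text{action at }\infty)$ applied to $f^*L_i^{1/n_i}$ --- is the natural one and is surely in the spirit of the cited proof. The chart computation giving $k_0^0=k_0^\infty=d$ (in $\Z_{n_0}$, resp.\ $\Z_{n_\infty}$) is fine, as is the degree computation $\deg f^*L_i^{1/n_i}=\tfrac{d}{n_i}\deg L_i$ and the unproven but standard football congruence you start from.

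The gap is at exactly the step you flag at the end. Your justification of the decomposition covers only the reduced representative $k_0^0=d\bmod n_0\in\{0,\dots,n_0-1\}$, and for that representative the character of $(k_0^0,\vec{k}^0)$ on the fiber of $L_i^{1/n_i}$ at $0$ is $\frac{(d\bmod n_0)a_i}{n_in_0}+\frac{k_i^0}{n_i}$, \emph{not} the $\frac{da_i}{n_in_0}+\frac{k_i^0}{n_i}$ that you then use. The two differ by $\frac{\lfloor d/n_0\rfloor a_i}{n_i}$, which is not an integer in general: note $n_0(1,\vec{0})=(0,\vec{a})$, so $d(1,\vec{0})=(d\bmod n_0,\lfloor d/n_0\rfloor\vec{a})$ and the cocycle does fire once $d\geq n_0$. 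Carrying your reduced-representative computation through consistently yields
\[
\frac{dc_i}{n_i}-\frac{k_i^0}{n_i}-\frac{k_i^\infty}{n_i}+\frac{\lfloor d/n_0\rfloor a_i+\lfloor d/n_\infty\rfloor b_i}{n_i}\in\Z,
\]
which is a genuinely different statement. Concretely, take $n_0=2$, $n_\infty=1$, $n_1=2$, $L_1=\so(\tfrac12[0])$: the degree $2$ fixed map from $\proj^1_{2,1}$ determined by the square root $\so(\tfrac12[0])$ of $f^*L_1$ is twisted at $0$ by the element $(0,1)\in G_0$ and untwisted at $\infty$, so with the reduced-pair labels one gets $\frac{dc_1}{n_1}-\frac{k_1^0}{n_1}-\frac{k_1^\infty}{n_1}=-\tfrac12$. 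The lemma is therefore only correct under the convention that $(k_0^0,\vec{k}^0)$ with $k_0^0=d$ denotes the element $d\cdot(1,\vec{0})+\sum_jk_j^0(0,e_j)$ with $d$ the honest integer --- equivalently $\vec{k}^0=\vec{s}-\lfloor d/n_0\rfloor\vec{a}$ when the twisting is the pair $(d\bmod n_0,\vec{s})$ in the paper's set-theoretic description of $G_0$ --- and your proof needs to establish and then use that convention. As written, the formula you use for the action and the decomposition you invoke to justify it do not match, and the mismatch is precisely the non-integral quantity that distinguishes the lemma from a false statement.
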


\begin{proof}
This is Lemmas II.12 and II.13 in \cite{j:egwtods}.
\end{proof}

In particular, once the degree and twisting at either $0$ or $\infty$ are fixed, then the other twisting is determined. 

\subsection{Open Gromov-Witten Invariants}  In \cite{kl:oinv}, Katz and Liu propose a tangent obstruction theory for the moduli space $\M_{g;h}(X,L|d;\gamma_1,...,\gamma_h)$ of stable maps from $h$-holed Riemann surfaces into $X$ with degree given by $d\in H_2(X,L;\Z)$ and boundary conditions given by $\gamma_i\in H_1(L;\Z)$ provided the following two conditions are met:
\begin{itemize}
\item $L$ is the fixed locus of an anti-holomorphic involution $\sigma:X\rightarrow X$ and
\item $(X,L)$ can be equipped with a well-behaved $\C^*$ action with the real subtorus $S^1$ fixing $L$.
\end{itemize}
An important aspect of their theory is that varying the choice of torus action leads to a family of localized virtual fundamental classes.

In particular, if $X=\text{Tot}(\so_{\proj^1}(-1)\oplus\so_{\proj^1}(-1))$, $L$ is the fixed locus of the involution $\sigma:(z,u,v)\rightarrow (1/\bar{z},\bar{z}\bar{v},\bar{z}\bar{u})$, and we consider degree $d$ maps from a disk $(D^2,S^1)$, then the proposed virtual fundamental class is
\[
e(R^1\pi_*f^*N(d))\cap\M_{0,1}(D^2,S^1|d)
\]
where $N(d)$ is a Riemann-Hilbert bundle defined in Example 3.4.4 of \cite{kl:oinv}.  Assuming that the virtual localization formula of \cite{Graber-Pandharipande} naturally generalizes to this setting, Katz and Liu suggested that the contribution of such a disk invariant to the open GW potential of $(X,L)$ should be given by
\[
\frac{t}{d^2}e^{eq}(-R^{\bullet}\pi_*f^*(T_X,T_L))=\frac{t}{d^2}\frac{e^{eq}H^1(N(d))}{e^{eq}H^0(L(2d))}
\]
where $L(2d)$ is defined in Example 3.4.3 of \cite{kl:oinv}, the Euler classes are $S^1$ equivariant, and $t$ is generator of $H_{S^1}^*(\{pt\})$.  The motivation for $N(d)$ and $L(2d)$ is that their holomorphic doubles are precisely the bundles $\so_{\proj^1}(-d)\oplus\so_{\proj^1}(-d)$ and $\so_{\proj^1}(2d)$, respectively.  In other words, the Riemann-Hilbert bundles are `half' of the bundles obtained by pulling back $T_X$ along a degree $d$ map to the base $\proj^1$.

We generalize this setup in two ways.  First, we observe that the disk invariants of Katz and Liu are local to the neighborhood of $0\in\proj^1$ and should therefore be regarded as disk invariants of $(\C^3,L)$.  Second, we generalize the setup to the case where the target (and thus the source) are orbifolds.  In order to make this generalization, we define certain orbifold Riemann-Hilbert bundles in the next section which naturally generalize $L(2d)$ and $N(d)$.

\subsection{Orbifold Riemann-Hilbert Bundles}\label{sec:rhbundles}

We define two classes of Riemann-Hilbert bundles on the orbifold disks $(D_r,S^1):=[(D^2,S^1)/\Z_r]$ which play a crucial role in the computation of the open GW potential of $\left(\left[\C^3/G\right],\La\right)$.

\subsubsection{$L(m)$:}\label{rh1}
For $m>0$, consider the bundle $\so(m,m,0)$ on the football $\proj_{r,r}^1$.  There is an anti-holomorphic involution 
\[
\sigma:(z,u)\rightarrow(1/\overline{z}, -\overline{z}^{-2m/r}\overline{u})
\]
where $z,u$ are local coordinates near $0$ on the coarse space.  We define a Riemann-Hilbert bundle $L(m)$ on $(D_r,S^1)$ by
\[
L(m):=(\so(m,m,0)_{|_{D_r}},\so(m,m,0)_{|_{S^1}}^{\sigma}).
\]

Then the global sections of $L(m)$ are by definition the $\sigma$ invariant global sections of $\so(m,m,0)$.  The global sections of $\so(m,m,0)$ are generated by
\[
\left\{ z^{\langle \frac{m}{r} \rangle+i} \right\}_{i=0}^{2\lfloor\frac{m}{r}\rfloor}.
\]
Since 
\[
\sigma:\left(z,\alpha_iz^{\langle\frac{m}{r}\rangle+i}\right)\rightarrow \left(z,-\overline{\alpha}_iz^{2\lfloor\frac{m}{r}\rfloor+\langle \frac{m}{r} \rangle-i}\right),
\]
then the global sections of $L(m)$ are:
\begin{equation}\label{h0sections}
\sum_{i=0}^{\lfloor \frac{m}{r} \rfloor-1}\left(\alpha_iz^{\langle\frac{m}{r}\rangle+i}-\overline{\alpha}_iz^{2\lfloor\frac{m}{r}\rfloor+\langle \frac{m}{r} \rangle-i}\right)+\sqrt{-1}\beta z^{\frac{m}{r}}
\end{equation}
with $\alpha_i\in\C$ and $\beta\in\R$.  We can embed the sections (\ref{h0sections}) torus equivariantly into the global sections of $\so(m,0,0)$ by mapping them to
\[
\sum_{i=0}^{\lfloor \frac{m}{r} \rfloor-1}\alpha_iz^{\langle\frac{m}{r}\rangle+i}+\beta z^{\frac{m}{r}}.
\]
\begin{remark}
This natural choice of identification determines an orientation for the sections.
\end{remark}

\subsubsection{$N(m,n,l)$:}\label{rh2}
Given $m,n\in\Z_{\geq 0}$ and $l\in\Q_{\geq 0}$ with $-l+\frac{n}{r}\in\Z$, consider the rank $2$ bundle \[N_1\oplus N_2=\so(m,-m-n,-l+\frac{n}{r})\oplus\so(-m-n,m,-l+\frac{n}{r})\] on $\proj_{r,r}^1$.  There is an anti-holomorphic involution \[\sigma:(z,u,v)\rightarrow(1/\overline{z},\overline{z}^{l}\overline{v},\overline{z}^{l}\overline{u}).\]
We define the Riemann-Hilbert bundle $N(m,n,l)$ on $(D_r,S^1)$ by:
\[
N(m,n,l):=\left(\left(N_1\oplus N_2\right)_{|_{D_r}},\left(N_1\oplus N_2\right)_{|_{S^1}}^{\sigma}\right).
\]

The $H^1$ sections of $N(m,n,l)$ are by definition the $\sigma$ invariant $H^1$ sections of $N_1\oplus N_2$.  
The $H^1$ sections of $N_1\oplus N_2$ are generated by
\[
\left\{ \left( z^{\langle \frac{m}{r} \rangle-i}, z^{\langle \frac{-m-n}{r}\rangle-j} \right) \right\}_{i,j=1}^{l+\langle\frac{m}{r}\rangle+\langle\frac{-m-n}{r}\rangle-1}.
\]
Since
\[
\sigma:\left( z,\alpha_iz^{\langle \frac{m}{r} \rangle-i}, \beta_jz^{\langle \frac{-m-n}{r}\rangle-j}\right)\rightarrow\left(z, \overline{\beta}_jz^{-l-\langle\frac{-m-n}{r}\rangle+j},\overline{\alpha}_iz^{-l-\langle\frac{m}{r}\rangle+i}  \right),
\]
the $\sigma$ invariant sections are:
\[
\left( \sum_{i=1}^{l+\langle \frac{m}{r} \rangle+\langle \frac{-m-n}{r} \rangle-1}\frac{\alpha_i}{z^{i-\langle \frac{m}{r} \rangle}},\sum_{i=1}^{l+\langle \frac{m}{r} \rangle+\langle \frac{-m-n}{r} \rangle-1}\frac{\overline{\alpha}_i}{z^{l+\langle \frac{m}{r} \rangle-i}}\right)
\]
\begin{remark}\label{orientation}
In order to compute the equivariant Euler class of these bundles, we face the choice of embedding the sections into either $N_1$ or $N_2$.  We will denote the corresponding Euler classes by $e(H^1(N(m,n,l)))^+$ or $e(H^1(N(m,n,l)))^-$, respectively.
\end{remark}

\section{The Orbifold Vertex}\label{sec:vertex}

In this section, the orbifold vertex is defined via localization.  The result is an expression in terms of $G$-Hodge integrals on $\M_{g,n}(\B G)$ and a combinatorial disk function.

\begin{figure}
\vspace{-2cm}\includegraphics[height=4.5cm]{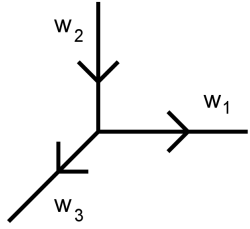}\vspace{-1cm}
\caption{The toric diagram $\Gamma_{\X}$.  The labelling of the weights coincides with the counterclockwise orientation of the coordinates.  The orientations of the edges show that disk invariants are computed with $D^+$ on the $1$st and $3$rd coordinates and $D^-$ on the $2$nd.}
\label{localvert}
\end{figure}

\subsection{Torus Action and Fixed Loci}

Set $\X:=[\C^3/G]$ and choose an orientation for $\Gamma_{\X}$ consistent with the labelling $(x_1,x_2,x_3)$ of the coarse coordinates for $\X$ (see Figure \ref{localvert}).  To equip $\X$ with a $\C^*$ action, begin with an action on $\C^3$ having CY weights \[\vec{w}:=\left(w_1,w_2,w_3\right) \text{ with } w_1+w_2+w_3=0.\]  This action descends to the quotient and the corresponding weights on the coordinates of the coarse space are $g_1w_1,g_2w_2,g_3w_3$.  One checks that $S^1\subset \C^*$ fixes each $\La_i$.

Pulling the torus action back to the moduli space of stable bordered maps, the $S^1$ fixed loci consist of maps $f:\Sigma\rightarrow \X$ such that
\begin{itemize}
\item $\Sigma$ is a compact curve attached to some number of disks at possibly twisted nodes,
\item $f$ contracts the compact curve to the origin,
\item $f$ maps a disk onto the $i$th axis with local expression $z\rightarrow z^d$ ($d$ is the winding number) and with boundary mapping to \[\La_i\cap\{x_j=0|j\neq i\}\cong S^1.\]  
\end{itemize}

Such a fixed locus can be encoded with the datum of:
\begin{itemize}
\item The genus of the contracted curve,
\item Winding profiles: Three vectors of positive integers, $\vec{d^i}=(d_1^i,...,d_{l_i}^i)$ $i=1,2,3$, corresponding to $l_i$ disks mapping to $\La_i$ with windings determined by $d_j^i$, and
\item Twisting profiles: Three vectors of elements of $G$, $\vec{k^i}=(k_1^i,...,k_{l_i}^i)$ $i=1,2,3$, corresponding to the twisting of the nodes attaching the corresponding disk to the contracted curve.
\end{itemize}

In order to more easily track such a locus, set $\vec{\mu}=(\mu^1,\mu^2,\mu^3)$ where
\begin{equation}\label{mu}
\mu^i=\left\{ (d_1^i,k_1^i),...,(d_{l_i}^i,k_{l_i}^i) \right\}.
\end{equation}

\begin{remark}
$\mu^i$ can naturally be identified with a conjugacy classes in the wreath product $G\wr S_d$ with $d=\sum_j d_j^i$ (c.f. \cite{m:sfhp}).
\end{remark}

We denote such a fixed locus by $F(g,\vec\mu)$.  $F(g,\vec\mu)$ can be identified with a product of a zero dimensional stack and $\M_{g,n}\left(\B G\right)\cap ev^*(-\vec{k})$ where $n=l_1+l_2+l_3$ and $ev^*(-\vec{k})$ is shorthand for pulling back all of the $-k_j^i$-twisted points via the appropriate evaluation maps.  The zero-dimensional stack encodes automorphisms of the fixed loci which will be accounted for in Section \ref{sec:ob}

The next lemma gives a condition on each disk which is necessary for the above locus to be nonempty (c.f. \cite{bc:ooinv}, section 2.2.2).

\begin{lemma}\label{condition2}
Suppose $h:(D_r,S^1)\rightarrow (\X,\La_i)$ is a $\C^*$ fixed winding $d$ map, twisted by $k=(k_1,...,k_l)\in G$.  Then
\[
\frac{-d}{g_i}+\sum_j\frac{k_j\alpha_j^i}{n_j}\in\Z.
\]
\end{lemma}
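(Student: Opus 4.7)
The plan is to read off local invariants of $h$ at the origin of $D_r$ in two different ways and match them. Near the origin, a $\C^*$-fixed map from $D_r$ to the $i$-th axis of $\X$ is determined, up to unit, by a monomial pullback $h^*(x_i) = z^m$ for some $m \in \Z_{\geq 0}$, together with a homomorphism $\rho : \Z_r \to G$ on stabilizers with $\rho(1) = k = (k_1, \ldots, k_l)$. The $\Z_r$-equivariance of $h^*$ forces the $\Z_r$-weight of $z^m$, namely $m/r$, to equal the $\Z_r$-weight of $x_i$ pulled back via $\rho$; since $x_i$ has $G$-weight $\sum_j k_j \alpha_j^i / n_j$ evaluated on $k$, this gives
\[
\frac{m}{r} \equiv \sum_j \frac{k_j \alpha_j^i}{n_j} \pmod{\Z}.
\]

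The second step is to relate $m$ to the winding $d$. By definition, $d$ is the degree of the boundary map $\partial D_r \to \La_i \cap \{x_j = 0 : j \neq i\}$. The source $\partial D_r$ is an $S^1$ of circumference $2\pi/r$, while the target is the quotient of $\{|x_i|=1\}$ by the effective $\Z_{g_i}$-action and hence an $S^1$ of circumference $2\pi/g_i$. The coarse boundary map $e^{i\phi} \mapsto e^{im\phi}$ has degree $m g_i /r$ between these two circles, giving $m = dr/g_i$. Substituting $m/r = d/g_i$ into the weight congruence above and rearranging yields the claim.

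The main subtlety to track is the factor of $g_i$ arising from the effective $\Z_{g_i}$-action on the target axis, which is what makes the lemma's statement independent of the source isotropy $r$. A structurally cleaner alternative is to holomorphically double $h$ to a $\C^*$-fixed map from the football $\proj^1_{r,r}$ to the compactification of the $i$-th axis (an abelian gerbe over $\proj^1_{g_i, g_i}$ whose root-construction data is read off from the weights $\alpha_j^i$ and the orders $n_j$), and then apply Lemma \ref{condition} to the doubled map; this is essentially the route taken in \cite{j:egwtods} (Lemmas II.12 and II.13), to which the proof as stated defers.
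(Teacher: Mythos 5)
Your argument is correct, and its main line is genuinely different from the paper's. The paper proves Lemma \ref{condition2} by doubling $h$ to a degree-$d$ map $\hat h:\proj^1_{r,r}\rightarrow[\so(-1)\oplus\so(-1)/G]$ and applying the degree/age integrality to the pulled-back \emph{normal} bundle $N_1$: the numerical degree $-d/g_i$ of $\hat h^*N_1$ minus the ages $\langle r_{i+1}/r\rangle$ at $0$ and $\langle(-r_i-r_{i+1})/r\rangle$ at $\infty$ must be an integer, and reducing mod $\Z$ gives the claim. You instead stay on the disk and work with the \emph{base} direction: the $\Z_r$-equivariance of the monomial lift $\tilde h^*(x_i)=z^m$ forces $m/r\equiv\sum_j k_j\alpha_j^i/n_j\pmod\Z$, and the boundary winding identifies $m/r=d/g_i$. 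Both arguments encode the same underlying fact (the monodromy at an orbifold point determines the fractional part of a local degree), but yours is more elementary and self-contained --- it requires no compactification, no normal bundle, and no bookkeeping of the twisting at $\infty$ --- at the cost of having to justify the relation $m=dr/g_i$ between the cover exponent and the coarse winding, which you do correctly via the circumferences $2\pi/r$ and $2\pi/g_i$ of the two boundary circles; this is exactly the factor that makes the statement independent of $r$, and it is consistent with the paper's appearance of $L(rd/g_i)$ in the disk function. The ``structurally cleaner alternative'' you sketch at the end is essentially the paper's actual proof of Lemma \ref{condition2}; one small correction there is that it is Lemma \ref{condition}, not Lemma \ref{condition2}, whose proof defers to Lemmas II.12 and II.13 of \cite{j:egwtods} --- the paper writes out the doubling argument for Lemma \ref{condition2} explicitly.
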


\begin{proof}
Doubling the map $h$, we get a degree $d$ map $\hat{h}:\proj_{r,r}^1\rightarrow [\so(-1)\oplus\so(-1)/G]$ which is twisted by $k$ at $0$ and $-k$ at $\infty$.  The target is a rank $2$ bundle $N_1\oplus N_2$ over $[\proj^1/G]$.  Consider the bundle $\hat{h}^*N_1$ on $\proj_{r,r}^1$.  Define
\[
r_{i}:=r\sum_j\frac{k_j\alpha_j^{i}}{n_j}
\]
Then the generator of the isotropy on $\proj_{r,r}^1$ at $0$ acts on the fiber of the pullback by $\frac{r_{i+1}}{r}$ and the generator of the isotropy at $\infty$ acts by $\frac{-r_i-r_{i+1}}{r}$.  Furthermore, the degree of $\hat{h}^*N_1$ is $\frac{-d}{g_i}$.  Since the isotropy at $0$ contributes a fractional part of $\langle \frac{r_{i+1}}{r} \rangle$ to the degree and the isotropy at $\infty$ contributes a fractional part of $\langle \frac{-r_i-r_{i+1}}{r} \rangle$ to the degree, we see that
\begin{equation}\label{lem}
\frac{-d}{g_i}-\left\langle \frac{r_{i+1}}{r} \right\rangle-\left\langle \frac{-r_i-r_{i+1}}{r} \right\rangle\in\Z.
\end{equation}
The result follows from (\ref{lem}).
\end{proof}

\subsection{The Obstruction Theory}\label{sec:ob}

In this section, we give a precise formula for the restriction of the obstruction theory to a fixed locus in terms of the combinatorial data of that locus.  We parse the contributions into three components.

\begin{itemize}
\item \textbf{Compact Curve}:  A contracting compact curve contributes a Hodge part:
\[
e^{eq}\left( \mathbb{E}_{\vec{\alpha}^1}^{\vee}(w_1)\oplus\mathbb{E}_{\vec{\alpha}^2}^{\vee}(w_2)\oplus\mathbb{E}_{\vec{\alpha}^3}^{\vee}(w_3) \right)
\]
where $\mathbb{E}_{\vec{\alpha}^i}^{\vee}(w_i)$ denotes the dual of the $\alpha^i$-character sub-bundle twisted by the torus weight $w_i$.  The contracting curve also contributes a factor of $w_i$ for each direction in which the curve can be perturbed, i.e. if the contraction to $\B G$ factors through $\B G_i$ where $G_i$ is the isotropy along the $i$th axis.  We denote this contribution by $\gamma$.

\item \textbf{Nodes}: Each node contributes a gluing factor of $|G|$ explained in Section 2.1 of \cite{cc:gl}.  There is a contribution of $w_i$ for each direction which the node can be perturbed.  There appears a node smoothing contribution of
\[
\left(\frac{ g_iw_i}{d_j^i}-\psi_j^i\right)^{-1}.
\]
There is a term 
\[
\left(\frac{w_ig_i}{d_j^i}\right)^{-\delta_{0,k_j^i}}
\]
to cancel the infinitesimal automorphism at the origin of the disk.  There is also a contribution of $w_l$ for each direction which the node can be perturbed.  This contributions is
\[
\delta(k_j^i):= \prod\{w_l:k_j^i\in G_l\}.
\]

\item \textbf{Disks}: A disk mapping to the $i$th leg with winding $d$ and twisting $k\in G$ contributes a combinatorial factor of $D_k^{\sigma_i}(d;\vec{w})$ which is described in the next section.
\end{itemize}

\begin{remark}
We have abused notation at this point.  Having previously defined the $w_i$ to be numbers we are now treating them as equivariant cohomology classes.  We have done this to ease notation and one can simply interpret the $w_i$ in this context as $w_it$ where $t$ is the generator of $H_{S^1}^*(\{pt\})$.
\end{remark}

\subsection{The Disk Function}

Suppose $h:(D_r,S^1)\rightarrow(\X,\La_i)$ is a $k=(k_1,...,k_l)$ twisted, winding $d$ map from an orbidisk.  As in Lemma (\ref{condition}), define integers
\[
r_i:=r\sum_j\frac{k_j\alpha_j^i}{n_j} \text{ for } i=1,2,3.
\]
Then the generator of the isotropy of $D_r$ acts on the fiber of the pullback of the three normal directions to the origin in $\X$ as $\frac{r_i}{r}$.

The disk has $d\cdot\frac{|G|}{g_i}$ global automorphisms and an infinitesimal automorphism factor $\frac{w_ig_i}{d}$.  We define
\[
D_k^{\sigma_i}(d;\vec{w}):=\left(\frac{g_iw_i}{d}\right)^{\delta_{0,k}}\frac{ g_i}{d|G|} \frac{e^{eq}\left(H^1N\left(r_{i+1}, r_i, \frac{d}{g_i}\right)\right)^{\sigma_i}}{e^{eq}\left(H^0L\left( \frac{rd}{g_i} \right)\right)}
\]
where
\[
\sigma_i:=\begin{cases}
+ & \text{if the $i$th leg is oriented outward,}\\
- & \text{if the $i$th leg is oriented inward.}
\end{cases}
\]

\begin{remark}
As a notational convenience, we define $r_4:=r_1$ and $r_5:=r_2$ to reflect the cyclic labeling of the coordinates.
\end{remark}

\begin{remark}
We can define $N\left(r_{i+1}, r_i, \frac{d}{g_i}\right)$ (i.e. it satisfies the hypothesis of Example \ref{rh2}) due to Lemma \ref{condition2}.
\end{remark}

Making the identifications of Example \ref{rh1}, we see that
\begin{align*}
e^{eq}\left(H^0L\left(\frac{rd}{g_i}\right)\right)&=\prod_{j=0}^{\left\lfloor \frac{d}{g_i} \right\rfloor-1}\left(  w_i-\frac{ g_iw_i}{d}\left( \left\langle \frac{d}{g_i} \right\rangle+j \right) \right)\\
&=\prod_{j=1}^{\left\lfloor \frac{d}{g_i}\right\rfloor}\frac{ g_iw_i}{d}j\\
&= \left( \frac{ g_iw_i}{d} \right)^{\left\lfloor \frac{d}{g_i} \right\rfloor} \left\lfloor \frac{d}{g_i} \right\rfloor !
\end{align*}
where we have left out the weight $0$ contribution from $h^*(\frac{\partial}{\partial z_i})$ as usual (c.f. section 27.4 of \cite{clay:ms}).  Choosing the `positive' orientation of sections discussed in Example \ref{rh2}, we compute
\begin{align*}
e^{eq}\left(H^1N\left(r_{i+1}, r_i, \frac{d}{g_i}\right)\right)^+&=\prod_{j=1}^{\frac{d}{g_i}+\left\langle \frac{r_{i+1}}{r} \right\rangle+\left\langle \frac{r_{i+2}}{r} \right\rangle-1}\left(  w_{i+1}+\frac{ g_iw_i}{d}\left( j- \left\langle \frac{r_{i+1}}{r} \right\rangle \right) \right)\\
&=\left( \frac{ g_iw_i}{d} \right)^{\frac{d}{g_i}+\left\langle \frac{r_{i+1}}{r} \right\rangle+\left\langle \frac{r_{i+2}}{r} \right\rangle-1}\frac{\Gamma\left( \frac{dw_{i+1}}{g_iw_i}+\left\langle \frac{r_{i+2}}{r} \right\rangle + \frac{d}{g_i} \right)}{\Gamma\left( \frac{dw_{i+1}}{g_iw_i}-\left\langle \frac{r_{i+1}}{r} \right\rangle +1 \right)}.
\end{align*}
One can check (using $w_1+w_2+w_3=0$) that the `negative' orientation of the sections merely introduces a factor of
\[
(-1)^{\frac{d}{g_i}+\left\langle \frac{r_{i+1}}{r} \right\rangle+\left\langle \frac{r_{i+2}}{r} \right\rangle-1}.
\]
Putting this all together, we compute
\[
D_k^+(d;\vec{w})=\left( \frac{ g_iw_i}{d} \right)^{\text{age}(k)+\delta_{0,k}-1}\frac{g_i}{d|G|\left\lfloor \frac{d}{g_i} \right\rfloor !}\frac{\Gamma\left( \frac{dw_{i+1}}{g_iw_i}+\left\langle \frac{r_{i+2}}{r} \right\rangle + \frac{d}{g_i} \right)}{\Gamma\left( \frac{dw_{i+1}}{g_iw_i}-\left\langle \frac{r_{i+1}}{r} \right\rangle +1 \right)}.
\]

\begin{remark}
This generalizes the disk function defined in Section 2.2.3 of \cite{bc:ooinv}.
\end{remark}

\subsection{The Orbifold Vertex}

We now put together the contributions coming from the compact curve, the nodes, and the disks.  We assign formal variables to track the discrete invariants.  We let $\lambda$ track the Euler characteristic $2g-2+n$.  For $\vec\mu$ as in (\ref{mu}), define the formal variables
\[
\bold{p}_{\vec\mu}:=p_{\mu^1}^1\cdot p_{\mu^2}^2\cdot p_{\mu^3}^3
\]
with formal multiplication given by
\[
\left(p^1_{\mu^1}\cdot p^2_{\mu^2}\cdot p^3_{\mu^2}\right)\cdot\left(p^1_{\nu^1}\cdot p^2_{\nu^2}\cdot p^3_{\nu^2}\right) := \left(p^1_{\mu^1\cup\nu^1}\cdot p^2_{\mu^2\cup\nu^2}\cdot p^3_{\mu^2\cup\nu^3}\right).
\] 

\begin{definition}\label{def:vertex}
Define
\begin{align*}
\hspace{-1cm}V_{\X,g,\vec\mu}(\vec{w}):=&\frac{\prod \left(|G|\delta(k_j^i)\left(\frac{ g_iw_i}{d}\right)^{-\delta_{0,k_j^i}}D_{k_j^i}^{\sigma_i}(d_j^i;\vec{w})\right)}{\gamma\cdot|\text{Aut}(\vec{\mu})|} 
\label{def:vertex1}\\
&\cdot\int_{\M_{g,n}\left(\B G\right)\cap ev^*\left(-\vec{k}\right)}\frac{e^{eq}\left( \mathbb{E}_{\vec{\alpha}^1}^{\vee}(w_1)\oplus\mathbb{E}_{\vec{\alpha}^2}^{\vee}(w_2)\oplus\mathbb{E}_{\vec{\alpha}^3}^{\vee}(w_3) \right)}{\prod \left(\frac{ g_iw_i}{d_j^i}-\psi_j^i\right)}
\end{align*}
where
\[
D_k^{+}(d;\vec{w})=\left( \frac{ g_iw_i}{d} \right)^{\text{age}(k)+\delta_{0,k}-1}\frac{g_i}{d|G|\left\lfloor \frac{d}{g_i} \right\rfloor !}\frac{\Gamma\left( \frac{dw_{i+1}}{g_iw_i}+\left\langle \frac{r_{i+2}}{r} \right\rangle + \frac{d}{g_i} \right)}{\Gamma\left( \frac{dw_{i+1}}{g_iw_i}-\left\langle \frac{r_{i+1}}{r} \right\rangle +1 \right)}
\]
and
\[
D_k^{-}(d;\vec{w})=(-1)^{\frac{d}{g_i}+\left\langle \frac{r_{i+1}}{r} \right\rangle+\left\langle \frac{r_{i+2}}{r} \right\rangle-1}D_k^{+}(d;\vec{w}).
\]

We define the \textit{connected open GW potential} of $\X$ by
\begin{equation*}\label{def:vertex2}
V_\X(\lambda,\bold{p};\vec{w}):=\sum_{g,\vec\mu} V_{\X,g,\vec\mu}(\vec{w})\lambda^{2g-2+n}\bold{p}_{\vec\mu}.
\end{equation*}
We define the \textit{disconnected open GW potential} of $\X$ by
\begin{equation*}\label{def:vertex3}
V_\X^{\bullet}(\lambda,\bold{p};\vec{w}):=\exp\left( V_\X(\lambda,\bold{p};\vec{w}) \right).
\end{equation*}
Finally, we define the \textit{oriented GW orbifold topological vertex} $V_{\X,\vec\mu}^{\bullet}(\lambda;\vec{w})$ to be the coefficient of $\mathbf{p}_{\vec\mu}$ in $V_\X^{\bullet}(\lambda,\bold{p};\vec{w})$.

\end{definition}

\begin{remark} If the moduli space in the above definition does not exist, we set the contribution equal to zero except in two exceptional cases where we make the following conventions for the unstable integrals.  We set
\[
\int_{\M_{0,1}(\B G)\cap ev^*(0)}\frac{1}{a-\psi}:=\frac{a}{|G|}
\]
and
\[
\int_{\M_{0,2}(\B G)\cap ev_i^*(k_i)}\frac{1}{(a_1-\psi_1)(a_2-\psi_2)}:=\begin{cases}
 \frac{1}{|G|\left(a_1+a_2\right)} &\text{ if } k_1=-k_2 \in G\\
 0 &\text{ else.}
\end{cases}
\]
\end{remark}

\section{Gluing Algorithm}\label{sec:gluing}

In this section, we describe an algorithm for gluing open amplitudes.  We prove that the GW orbifold vertex and the gluing algorithm determine the GW invariants of any toric CY orbifold of dimension 3.


\subsection{The Geometric Setup}

Let $\Y$ be a toric Calabi-Yau orbifold of dimension $3$ and $\Gamma_{\Y}$ the corresponding toric diagram with a chosen orientation as in Definition \ref{orient}.  Let $V:=\{v_i\}$ be the set of vertices in $\Gamma$ and let $G_i$ be the isotropy group at the corresponding point $y_i\in\Y$.  We set $E^c:=\{e_{ij}\}$ to be the compact edges in $\Gamma_{\Y}$ directed from $v_i$ to $v_j$.  Let $\mathcal{C}_{ij}$ denote the corresponding line connecting $y_i$ to $y_j$.  If $e\in E^{c}$, let $C_e$ be the corresponding curve in $\Y$, let $D_e\in H^2(\Y,\Q)$ be the dual of $[C_e]\in H_2(\Y,\Q)$ under the natural pairing, and let $G_e$ be the isotropy over $C_e$.  Define $N_{e,r}$ ($N_{e,l}$) to be the orbifold line bundle on $C_e$ corresponding to the toric divisor to the right (left) of $e$.


Choose a Calabi-Yau $\C^*$ action on $\Y$.  Let $\Y_i$ be a neighborhood of $y_i$ so that $\Y_i\cong [\C^3/G_i]$.  Let $(x_1^i,x_2^i,x_3^i)$ be the coarse coordinates at $\Y_i$ so that the cyclic ordering of the coordinates coincides with the orientation of $\Gamma_{\Y}$ at $y_i$.  These coordinates inherit an orientation and a $\C^*$ action from $\Y$, we label the weights of the action $\vec{w}(i):=(w_1^i,w_2^i,w_3^i)$.  We define Lagrangians in $\Y_i$ as in Section \ref{sec:vertex}.  This gives us everything we need to compute amplitudes as in Definition \ref{def:vertex}.  


\subsection{Edge Assignments}\label{sec:edge}



\begin{definition}\label{def:pairing}
We say that the triple $(d,k,k')\in \N\times G_i\times G_{i'}$ is \textit{admissible with respect to $e_{ii'}$} if the map 
\[
f:\proj_{r,s}^1\rightarrow\mathcal{C}_{ii'}
\]
given by $z\rightarrow z^d$ and twisted by $k$ at $y_i$ and $k'$ at $y_{i'}$ exists.  In this case, define
\[
n(e,d,k,k'):=\text{Int}(f^*N_{e,r})+1
\]
(c.f. Definition \ref{def:int}).
\end{definition}



\begin{remark}
Lemma \ref{condition} gives a characterization of admissible triples.
\end{remark}


\begin{definition}
An \textit{edge assignment} for $\Gamma_{\Y}$ is a finite set of admissible triples for each $e\in E^c$.  
Let $A_{\Y}$ be the set of edge assignments.  For $\Lambda\in A_\Y$, $e\in E^c$, define $\Lambda_e$ to be the set of admissible triples over $e$.  Define 
\[
n(\Lambda_e):=\sum_{\Lambda_e}n(e,d,k,k').
\]
Define $\rho_d^k(\Lambda_e)$ to be the number of times the triple $(d,k,k')$ appears in $\Lambda_e$.  Also define
\[
d(\Lambda_e):=\sum_{\Lambda_e}d.
\]
Finally, define $\Lambda_i$ to be the induced triple of twisting/winding profiles at $y_i$.
\end{definition}

\subsection{Gluing Algorithm}

We are now ready to state the main gluing algorithm.
\begin{theorem}\label{gluing}
The Gromov-Witten potential of $\Y$ is
\[
Z_{\Y}^\bullet:=\sum_{\Lambda\in A_\Y}\prod_{v_i\in V}V_{\Y_i,\Lambda_i}^\bullet(\lambda;\vec{w}(i))\prod_{e\in E^c}\prod_{d,k}\left( d|G_e| \right)^{\rho_k^d(\Lambda_e)}\rho_k^d(\Lambda_e)!(-1)^{n(\Lambda_e)}Q_e^{d(\Lambda_e)}
\]
where $Q_e$ are formal variables tracking the degree and we impose $Q_e=Q_e'$ if $[C_e]=[C_e']\in H_2(\Y)$.
\end{theorem}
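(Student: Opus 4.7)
The plan is to apply virtual Atiyah-Bott localization to $\overline{\mathcal{M}}_{g,n}(\Y,\beta)$ with respect to the Calabi-Yau $\C^*$ action, and then to reorganize the resulting graph sum into vertex and edge contributions. The $S^1$-fixed loci of the moduli space consist of maps $f\colon\Sigma\to\Y$ such that the preimage of each torus-fixed point $y_i$ is a (possibly empty, possibly disconnected) nodal orbicurve that $f$ contracts to $y_i$, and the preimage of each torus-invariant line $\mathcal{C}_{ij}$ is a disjoint union of orbifold footballs $\proj^1_{r,s}$ mapped via $z\mapsto z^d$. The combinatorial data of such a locus is precisely an edge assignment $\Lambda\in A_\Y$ together with a genus and a twisted-point assignment at each vertex, where the admissibility condition of Definition \ref{def:pairing} is exactly Lemma \ref{condition}.

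Next I would compare each fixed locus $F_\Lambda$ with the fixed loci appearing in the definition of the vertex. For each vertex $v_i$, cutting each football at its midpoint and replacing the two halves by a pair of orbidisks identifies $F_\Lambda$ (as a stack) with the product $\prod_i F(g_i,\Lambda_i)$ of the vertex loci from Section \ref{sec:vertex}, modulo a discrepancy in automorphisms: each closed football of winding $d$ with isotropy $G_e$ has $d|G_e|$ global automorphisms, while the pair of disks it replaces contributes automorphism factors already absorbed in the disk function $D_k^{\sigma_i}$. Collecting these gives the $(d|G_e|)^{\rho^d_k(\Lambda_e)}\rho^d_k(\Lambda_e)!$ factor in the theorem, the factorial coming from permutations of parallel footballs of equal decoration.

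The main analytic step is to verify the local-to-global factorization of the virtual normal bundle. The obstruction theory on a closed football $C_e$ of winding $d$ and twistings $(k,k')$ splits naturally into the two halves of $H^0(f^*T_{\mathcal{C}_{ij}})$ and $H^1(f^*N_{e,r}\oplus f^*N_{e,l})$ across the Riemann-Hilbert cut, together with a node smoothing term at the cut point. Using the sections description in Section \ref{sec:rhbundles}, the product $D_k^{\sigma_i}(d;\vec w(i))\cdot D_{k'}^{\sigma_{i'}}(d;\vec w(i'))$, divided by the node smoothing factor and the infinitesimal automorphism cancellations, reproduces $e^{eq}(\text{normal bundle of }F_\Lambda\text{ at }C_e)^{-1}$ up to a sign that tracks the orientation choice from Remark \ref{orientation}. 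That sign is precisely $(-1)^{n(\Lambda_e)}$: the exponents of the signs in $D^-$ accumulate with a twist coming from the integral part $\mathrm{Int}(f^*N_{e,r})$ of the right normal bundle, which by Calabi-Yau and the conventions of Definition \ref{def:int} equals $n(e,d,k,k')-1$ modulo the contributions already carried by $D^+$. The degree-tracking variable $Q_e^{d(\Lambda_e)}$ records $\int_{C_e}D_e=d$ for each football, summed over the edge.

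The hard part is bookkeeping: reconciling the vertex-level orientation conventions (which depend on $\sigma_i\in\{+,-\}$ as fixed by the orientation of $\Gamma_\Y$) with the edge-level sign $(-1)^{n(\Lambda_e)}$, and correctly tracking the gerbe-theoretic gluing factor $|G_e|$ at the twisted node (which appears once per football in the closed computation but is built into the $|G|$-gluing factors of Section \ref{sec:ob} at both endpoints). To handle this cleanly I would first establish the identity for a single edge with prescribed admissible triple, verifying the sign by comparing the embedding of $\sigma$-invariant sections into either $N_1$ or $N_2$ on each half-football; then extend multiplicatively over all edges, passing to the disconnected potential so that the exponential of the connected vertex potential absorbs the sum over the number of footballs on each edge, which is what produces the combinatorial edge weights $(d|G_e|)^{\rho^d_k}\rho^d_k!$ after the formal variables $\mathbf{p}_{\vec\mu}$ are summed out against the edge gluing. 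Taking the product over vertices and edges and summing over $\Lambda$ yields the stated formula.
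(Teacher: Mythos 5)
Your proposal follows essentially the same route as the paper: reduce by virtual localization to the single-edge check that a fixed football's contribution factors as $D^+_{\vec k^0}\cdot D^-_{\vec k^\infty}$ times the gluing factor $d|G_e|$ and the sign $(-1)^{\mathrm{Int}(f^*N_{e,r})+1}$, the latter produced by pairing the weights of $H^\bullet(f^*N_l)$ and $H^\bullet(f^*N_r)$ via the Calabi-Yau condition. The paper's Section \ref{sec:proof} is exactly this computation (it leaves the surrounding combinatorial bookkeeping implicit, which you spell out a bit more), so there is no substantive difference in approach.
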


\begin{remark}
It is a consequence of the theorem that $Z_{\Y}^\bullet$ is independent of the choices of orientation and torus action.
\end{remark}

\begin{remark}
The obvious extension of this algorithm can be used to define/compute the open GW potential of $\Y$ with inner and/or outer branes.  These potentials will depend on the choices of orientation and torus action.
\end{remark}

\subsection{Proof of Gluing Formula}\label{sec:proof}

By localization arguments, it need only be checked that disk contributions glue to multiple cover contributions on a given edge.  Using the notation of Section \ref{sec:orbiline}, a given edge is isomorphic to $\proj_{r,s}^{\mathbf{(L_i,n_i)}}$ with isotropy group $G_0$ at $0$ and $G_{\infty}$ at $\infty$.  To prove the gluing formula, we compute the contribution to the potential from a representable $\C^*$ fixed map from a football $f:\F\rightarrow \proj_{r,s}^{\mathbf{(L_i,n_i)}}\subset\Y$ twisted by $\vec{k}^0=(d,(k_1^0,...,k_l^0))\in G_0$ at $0$ and $\vec{k}^{\infty}=(d,(k_1^{\infty},...,k_l^{\infty}))\in G_{\infty}$ at $\infty$.  We show that this contribution decomposes as the corresponding disk contributions along with the prescribed gluing factor.

Since $\Y$ is a toric Calabi-Yau 3-fold, the normal bundle splits as
\begin{align*}
N_{\proj_{r,s}^{\mathbf{(L_i,n_i)}}/\Y}&=N_l\oplus N_r\\
&\hspace{-1cm}=\so(a,b,c;m_1,...,m_l)\oplus\so(-a-1,-b-1,-c;-m_1,...,-m_l).
\end{align*}
(c.f. Section \ref{sec:orbiline})

The total space of the normal bundle inherits an orientation and a Calabi-Yau $\C^*$ action from $\Y$.  We label the oriented weights of this action (on $\C^3$) by $w_1, w_2, w_3$ at $0$ and
\[
w_1':=\frac{-rw_1}{s},
\]
\[
w_2':=w_3-rw_1\left( \frac{-a-1}{r}+\frac{-b-1}{s}-c - \sum m_i\left( \frac{a_i}{rn_i}+\frac{b_i}{sn_i}+\frac{c_i}{n_i} \right) \right),
\]
and
\[
w_3':=w_2-rw_1\left( \frac{a}{r}+\frac{b}{s}+c+\sum m_i\left( \frac{a_i}{rn_i}+\frac{b_i}{sn_i}+\frac{c_i}{n_i} \right) \right)
\]
at $\infty$.  Near $\proj_{r,s}^{\mathbf{(L_i,n_i)}}\subset\Y$, $\Gamma_{\Y}$ can be decorated as in Figure \ref{localline} to account for the orientation and the $\C^*$ action.

\begin{figure}
\vspace{-2cm}\includegraphics[height=5cm]{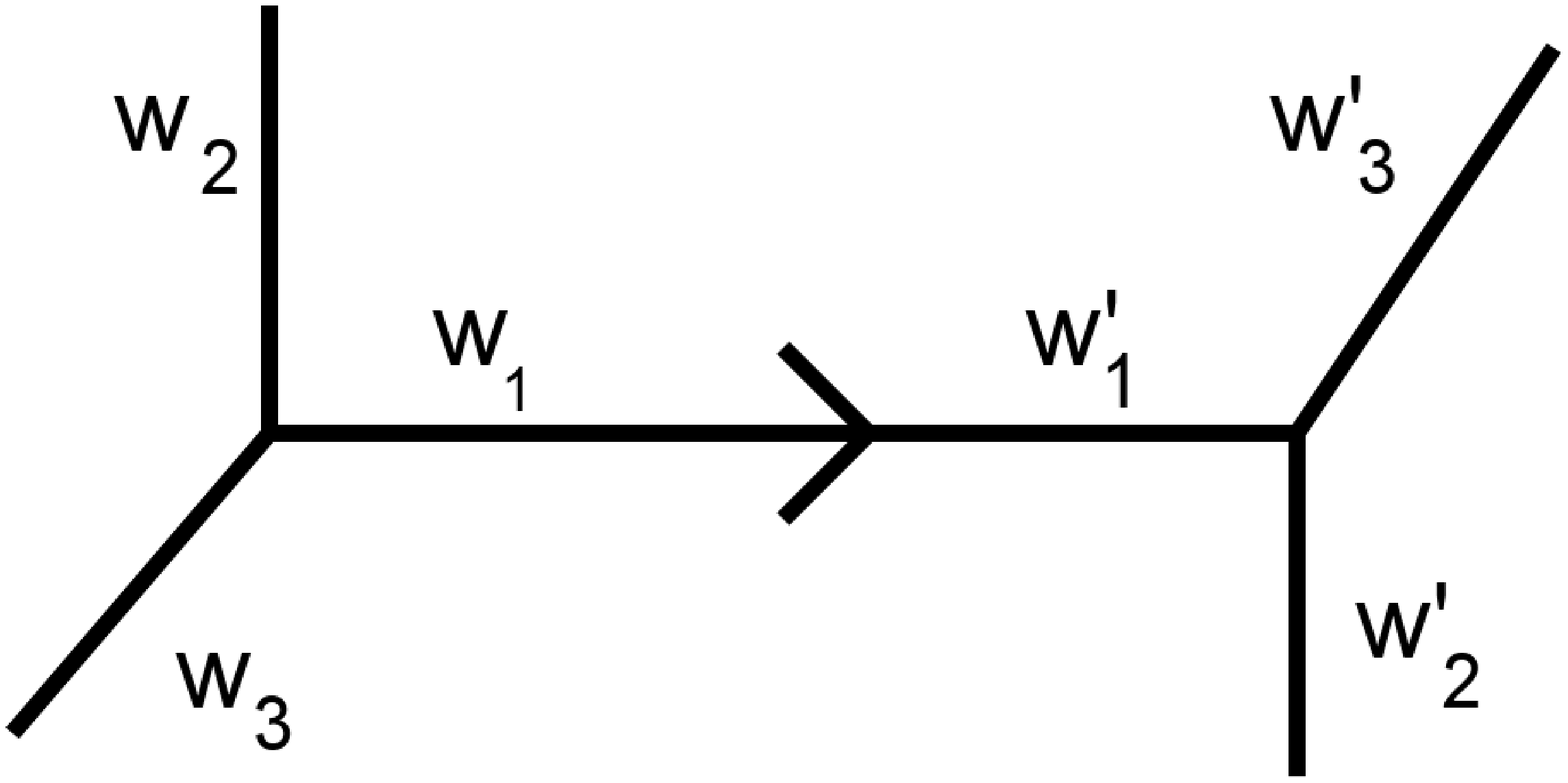}\vspace{-1.5cm}
\caption{The neighborhood of $\proj_{r,s}^{\mathbf{(L_i,n_i)}}\subset\Y$}
\label{localline}
\end{figure}

Using the usual obstruction theory for local invariants and the virtual localization formula, the contribution of the map $f$ is
\begin{equation}\label{eqn:footballcontrib}
\frac{w_1w_1'}{\tau_f\tau'_fd^3n}\frac{e^{eq}\left( -R^{\bullet}\pi_*f^*N_l \right)\cdot e^{eq}\left(-R^{\bullet}\pi_*f^*N_r \right)}{ e^{eq}\left(R^0\pi_*f^*T_{\proj_{r,s}^{\mathbf{(L_i,n_i)}}}\right)}
\end{equation}
where $n=\sum n_i$ and as before $\tau_f$ and $\tau'_f$ cancel the infinitesimal automorphism if $0$ and/or $\infty$ is either marked or a node.

\begin{remark}\label{convention}
In the computations that follow, we make the following index and product convention:  Suppose $m < n$, set
\[
\{A_i\}_{i=n}^{m}=\{A_{m+1},A_{m+2},...,A_{n-1}\},
\]
i.e. we forget the first and last terms if the index decreases.  Similarly, we set
\[
\prod_{i=n}^{m}A_i=\left(A_{m+1}\cdot A_{m+2}\cdot...\cdot A_{n-1}\right)^{-1}.
\]
If the index increases, we interpret the sets and products as usual.  The only scope of this convention is to make the following exposition more compact.
\end{remark}

\subsubsection{Numerator of (\ref{eqn:footballcontrib})}

We begin by computing $e^{eq}\left(R^{\bullet}\pi_*f^*N_l \right)$.  Suppose $s_0$ is a minimally vanishing section of $f^*N_1$ on the complement of $\infty$ and $s_{\infty}$ is a minimally vanishing section on the complement of $0$.  Then if $z$ is a coarse local coordinate of $\mathcal{F}$ at $0$,
\[
s_{\infty}=s_0z^{M}
\]
where
\[
M:=\text{Int}(f^*N_l)=\deg(f^*N_l)-\text{ord}_0(s_0)-\text{ord}_{\infty}(s_{\infty}).
\]
Therefore the vector space $H^*(\F,f^*N_l)$ is generated by $\{s_0z^p\}_{p=0}^M$ with equivariant weights
\[
\left\{ W_1(p):=\left( w_2-\frac{ rw_1}{d}\left( \text{ord}_0(s_0)+p \right)\right) \right\}_{p=0}^M.
\]
Similarly, if we let $t_0$ and $t_{\infty}$ be minimally vanishing sections of $f^*N_r$, we see that
\[
t_{\infty}=t_0z^M
\]
where 
\[
N:=\text{Int}(f^*N_r)=\deg(f^*N_r)-\text{ord}_0(t_0)-\text{ord}_{\infty}(t_{\infty}).
\]
Therefore the vector space $H^*(\F,f^*N_r)$ is generated by $\{t_0z^q\}_{q=0}^N$ with equivariant weights
\[
\left\{W_2(q):= \left( w_3-\frac{ rw_1}{d}\left( \text{ord}_0(t_0)+q \right)\right) \right\}_{q=0}^N.
\]
Multiplying the classes, we get
\[
e^{eq}\left( R^{\bullet}\pi_*f^*N_{\proj_{r,s}^{\mathbf{(L_i,n_i)}}/\Y}\right)=\prod_{p=0}^M W_1(p) \prod_{q=0}^N W_2(q).
\]

Define positive integers
\[
\hat{r}:=\frac{d}{r}+\ord_0(s_0)+\ord_0(t_0)
\]
and
\[
\hat{s}:=\frac{d}{s}+\ord_{\infty}(s_{\infty})+\ord_{\infty}(t_{\infty}).
\]
Using the Calabi-Yau condition on the weights, i.e. $w_1+w_2+w_3=0$, one easily computes that
\begin{equation}
-W_2(-p-\hat{r})=W_1(p).
\end{equation}
Therefore we can write
\begin{equation}\label{eqn:cancel}
e^{eq}\left( R^{\bullet}\pi_*f^*N_{\proj_{r,s}^{\mathbf{(L_i,n_i)}}/\Y}\right)=(-1)^{N+1}\prod W_1(p)
\end{equation}
where the product is indexed by
\[
0\longrightarrow M \hspace{1cm}\text{and}\hspace{1cm} -\hat{r}-N\longrightarrow -\hat{r}.
\]
Recalling the index convention of Remark \ref{convention} and using the fact that $M+N+\hat{r}+\hat{s}=0$ (this depends on the CY condition), we see that many of the terms in (\ref{eqn:cancel}) cancel and the remaining terms are indexed by
\[
0\longrightarrow -\hat{r} \hspace{1cm}\text{and}\hspace{1cm} M+\hat{s}\longrightarrow M.
\]
Making the appropriate cancellations and inverting, we compute
\begin{align*}
e^{eq}\left( -R^{\bullet}\pi_*f^*N_{\proj_{r,s}^{\mathbf{(L_i,n_i)}}/\Y}\right)&=(-1)^{N+1}\prod_{p=-\hat{r}+1}^{-1} W_1(p) \prod_{p=M+1}^{M+\hat{s}-1}W_1(p)\\
&=(-1)^{N+1}\prod_{p=1}^{\hat{r}-1} W_1(-p) \prod_{p=1}^{\hat{s}-1}W_1(p+M)\\
\end{align*}
Using the definition of $W_1$, we conclude that the numerator of (\ref{eqn:footballcontrib}) is 
\begin{align}\label{eqn:normalcontrib}
e^{eq}\left( -R^{\bullet}\pi_*f^*N_{\proj_{r,s}^{\mathbf{(L_i,n_i)}}/\Y}\right)=(-1)^{N+1}&\left( \frac{ rw_1}{d} \right)^{\hat{r}-1}\frac{\Gamma\left( \frac{dw_2}{rw_1}+\ord_0(t_0)+\frac{d}{r} \right)}{\Gamma\left( \frac{dw_2}{rw_1} -\ord_0(s_0)+1 \right)}\notag\\
&\cdot\left( \frac{ sw_1'}{d} \right)^{\hat{s}-1}\frac{\Gamma\left( \frac{dw_3'}{sw_1'}+\ord_{\infty}(t_{\infty})+\frac{d}{s} \right)}{\Gamma\left( \frac{dw_3'}{sw_1'} -\ord_{\infty}(s_{\infty})+1 \right)}.
\end{align}

\subsubsection{Denominator of (\ref{eqn:footballcontrib})}

Lastly, we must compute $e^{eq}\left(R^0\pi_*f^*T_{\proj_{r,s}^{\mathbf{(L_i,n_i)}}}\right)$.  Since $T_{\proj_{r,s}^{\mathbf{(L_i,n_i)}}}=\so(1,1,0;0,...,0)$, then $f^*T_{\proj_{r,s}^{\mathbf{(L_i,n_i)}}}=\so(d,d,0).$  If $v_0$ is a minimally vanishing section at $0$, then $H^0(\F,f^*T_{\proj_{r,s}^{\mathbf{(L_i,n_i)}}})$ is generated by
\[
\left\{ v_0z^j\frac{\partial}{\partial z} \right\}_{j=0}^{\lfloor \frac{d}{r} \rfloor + \lfloor \frac{d}{s} \rfloor}
\]
where $z$ is acted on with weight $-\frac{rw_1}{d}$, $v_0$ is acted on with weight $-\frac{rw_1}{d}\left\langle \frac{d}{r} \right\rangle$ and the bundle is linearized with weights $w_1,w_1'$.  Therefore the denominator of (\ref{eqn:footballcontrib}) is
\begin{align}\label{eqn:tangentcontrib}
e^{eq}\left(R^0\pi_*f^*T_{\proj_{r,s}^{\mathbf{(L_i,n_i)}}}\right)&=\prod_{j=0}^{\lfloor \frac{d}{r} \rfloor-1}\prod_{j=\lfloor \frac{d}{r} \rfloor+1}^{\lfloor \frac{d}{s} \rfloor}\left(  w_1-\frac{ rw_1}{d}\left( \left\langle \frac{d}{r} \right\rangle+j\right)\right)\notag\\
&=\left( \frac{ rw_1}{d} \right)^{\lfloor \frac{d}{r} \rfloor}\left\lfloor \frac{d}{r} \right\rfloor !\cdot\left( \frac{ sw_1'}{d} \right)^{\lfloor \frac{d}{s} \rfloor}\left\lfloor \frac{d}{s} \right\rfloor !.
\end{align}

Computing (\ref{eqn:footballcontrib}) with equations (\ref{eqn:normalcontrib}) and (\ref{eqn:tangentcontrib}) shows that the edge contribution from $f$ towards the closed GW invariants is equal to
\[
dn(-1)^{N+1}D_{\vec{k}^0}^+(d;\vec{w})D_{\vec{k}^{\infty}}^-(d;\vec{w}')
\]
which completes the proof.

\section{Insertions}\label{sec:insertions}

In the original formulation of the topological vertex (\cite{akmv:tv}), insertions of cohomology classes were disregarded.  Indeed, for smooth toric CY 3-folds the moduli spaces of stable maps have virtual dimension 0.  Due to the fundamental class axiom, only finitely many invariants with fundamental class insertions are nonzero.  By dimensional reasons, the rest of the invariants have only divisor insertions, which are easily handled by the divisor equation.  In the orbifold case, however, the divisor equation does not hold for the twisted classes in degree $2$ and insertions of these classes tend to give interesting invariants.  Consequently, we develop our formalism to include insertions.

Throughout this section, we restrict to the case where $\Y$ is an \textit{effective} orbifold (i.e. the isotropy of the generic point is trivial).  By the Gorenstein condition, this implies that the nontrivial isotropy is supported in codimension $2$.  One could presumably carry out the computations for non-effective orbifolds, however we do not pursue that here.

\subsection{Orbifold Comology of $\Y$}\label{sec:classes}

To compute the primary insertion invariants, we must first understand the Chen-Ruan orbifold cohomology of $\Y$.  Since we have restricted to the effective case, the only class in degree $0$ is the untwisted fundamental class.  As in the smooth case, if we disregard the finitely many nonzero invariants with fundamental class insertions, then by dimensional reasons the rest of the insertions must be in $H_{CR}^2(\Y)$.

One computes that $H_{CR}^2(\Y)$ is generated by the following classes.
\begin{itemize}
\item Divisor classes $D_e$ in the untwisted sector.
\item Twisted line classes $\mathfrak{c}_{e,h}:=(C_e,h)\subset\mathcal{IY}$ with $h\in G_e$.
\item Twisted point classes $\mathfrak{v}_{i,h}:=(y_i,h)\subset\mathcal{IY}$ where $\Y_i$ can be identified with $\left[\C^3/G_i\right]$, the fixed point set of $h\in G_i$ is $\{y_i\}$, and $h$ acts on $\C^3$ with weights $e^{2\pi i r_j}$ with $\sum r_j=1$.
\end{itemize}

Assign formal variables $t_e, c_{e,h}, v_{i,h}$ corresponding to insertions of these cohomology classes.

\subsection{Orbifold Vertex with Insertions}

Suppose $\X$ is as in section \ref{sec:vertex} such that the action of $G$ is effective.  Let $e_1, e_2, e_3$ be the edges and let $v$ be the vertex in $\Gamma_{\X}$.  As before, $H_{CR}^2(\X)$ is generated by classes $\mathfrak{c}_{e_j,h}$ and $\mathfrak{v}_h$ with corresponding formal variables $c_{e_j,h}$ and $v_h$.  We modify Definition \ref{def:vertex} to include insertions as follows.

\begin{itemize}
\item We denote the invariant with $m_{j,h}$ insertions of $\mathfrak{c}_{e_j,h}$ and $n_h$ insertions of $\mathfrak{v}_h$ by $V_{\X,g,\vec{\mu}}\left((\mathfrak{c}_{e_j,h})^{m_{j,h}}\cdot (\mathfrak{v}_h)^{n_h};\vec{w}\right)$.  The effect of adding these twisted insertions on the localized contribution is simply to prescribe the twisting at the marked points.  Therefore, the corresponding vertex is defined by replacing the moduli space in Definition \ref{def:vertex} with
\[
\M_{g,n+\sum_h(n_h+\sum_j m_{j,h})}\left(\B G\right)\cap ev^*\left(\vec{k}\right)\cap \prod_{h\in G} ev^*(\fc_h)^{n_h+\sum_j m_{j,h}}
\] 
\item $V_\X(\lambda,\mathbf{p},\mathbf{c},\mathbf{v};\vec{w})$ is defined by taking the sum in (\ref{def:vertex2}) over all $g,\vec{\mu}$ as well as all possible insertions of $\mathfrak{c}_{e_j,h}$ and $\mathfrak{v}_h$ and including the formal variables 
\[
\frac{(c_{e_j,h})^{m_{j,h}}(v_h)^{n_h}}{m_{j,h}!n_h!}.
\] 
\item The \textit{oriented GW orbifold vertex with insertions} $V_{\X,\vec\mu}^{\bullet}(\lambda,\mathbf{c},\mathbf{v};\vec{w})$ is defined to be the coefficient of $\mathbf{p}_{\vec\mu}$ in
\[
V_\X^{\bullet}(\lambda,\mathbf{p},\mathbf{c},\mathbf{v};\vec{w}):=\exp\left( V_\X(\lambda,\mathbf{p},\mathbf{c},\mathbf{v};\vec{w}) \right).
\]
\end{itemize}

\subsection{Gluing with Insertions}
In order to obtain the full GW potential of $\Y$ (sans fundamental class insertions), we modify the gluing algorithm of Theorem \ref{gluing} as follows.
\begin{itemize}
\item At each vertex $v_i$, we compute $V_{\Y_i,\Lambda_i}^\bullet(\lambda,\mathbf{c}(i),\mathbf{v}(i);\vec{w}(i))$.
\item We replace $Q_e$ with $\exp(t_e)Q_e$ to account for the divisor equation imposing the relation $t_e=t_{e'}$ if $D_e=D_{e'}\in H^2(\Y)$.
\end{itemize}

\section{Examples}\label{sec:examples}
\subsection{Smooth Case}\label{sec:smooth}
In the smooth case, we recover the computations of \cite{df:lgta} which can then be related to the topological vertex of \cite{akmv:tv} and the generating functions $P_{\vec{\mu}}$ of \cite{orv:qcycc}.  We summarize these correspondences in this section.

Assume that $G$ is trivial so that $\X=\C^3$.  Orient $\Gamma_{\X}$ with all three edges directed outward.  $\vec\mu$ is simply a triple of partitions $(\mu^1,\mu^2,\mu^3)$ where $\mu^i=(d_1^i,...,d_{l_i}^i)$ determines the winding profile along the $i$th Lagrangian.  From Definition \ref{def:vertex}, we compute 
\[
V_{g,\vec\mu}(\vec w)=\frac{\prod_{i,j}w_1w_2w_3D^+(d_j^i;\vec w)}{|\aut(\vec\mu)|}\int_{\M_{g,l(\vec\mu)}}\frac{e^{eq}\left( \mathbb{E}_{g}^{\vee}(w_1)\oplus\mathbb{E}_{g}^{\vee}(w_2)\oplus\mathbb{E}_{g}^{\vee}(w_3)  \right)}{w_1w_2w_3\prod_{i,j}
\left( \frac{w_i}{d_j^i}-\psi_j^i \right)}
\]
where
\[
D^+(d_j^i;\vec w)=\frac{1}{w_id!}\frac{\Gamma\left( \frac{dw_{i+1}}{w_i} + d \right)}{\Gamma\left( \frac{dw_{i+1}}{w_i} +1 \right)}.
\]
Simplifying, we get
\[
\hspace{-.7cm}V_{g,\vec\mu}(\vec w)=\frac{1}{|\aut(\vec{\mu})|}\left[\prod_{i=1}^3\prod_{j=1}^{l_i}\frac{\prod_{k=1}^{d_j^i-1}(d_j^iw_{i+1}+kw_i)}{(d_j^i-1)!w_i^{d_j^i-1}}\right]\int_{\overline{\mathcal{M}}_{g,\l(\vec{\mu})}}\prod_{i=1}^3\frac{\Lambda_g^{\vee}(w_i)w_i^{l(\vec{\mu})-1}}{\prod_{j=1}^{l_i}(w_i(w_i-d_j^i\psi_j^i))}
\]
where $\Lambda_g^{\vee}(w_i):=(-1)^g\sum_{i=0}^g(-w_i)^i\lambda_{g-i}$.
This computation was made in Appendix A of \cite{df:lgta}.  The results of \cite{lllz:mttv} and \cite{moop:gwdtc} together imply that 
\begin{equation}\label{eqn:smvert}
(-\sqrt{-1})^{l(\vec\mu)}V_{\vec\mu}^{\bullet}(\lambda;\vec{w})=C_{\mu^1,\mu^2,\mu^3}^{(\frac{w_2}{w_1},\frac{w_3}{w_2},\frac{w_1}{w_3})}(q)
\end{equation}
where the right side is the framed topological vertex defined in \cite{akmv:tv} via large N duality and where $q=e^{\sqrt{-1}\lambda}$.

\subsubsection{Connection with 3d Partitions}
In the rest of this section, we extract from the literature the explicit connection between (\ref{eqn:smvert}) and the generating functions of 3d partitions defined in \cite{orv:qcycc}.

There are two natural bases for the center of the group ring of $S_d$, related by the character table.  Equation (\ref{eqn:smvert}) is written in terms of the partition basis.  The authors of \cite{akmv:tv} suggest that it is more natural at times to view the vertex in the representation basis.  Also in \cite{akmv:tv}, a particular formula is derived for relating different framings of the vertex.  Applying this change of basis and framing dependency formula to equation (\ref{eqn:smvert}), we compute that the topological vertex at the canonical framing in the representation basis is given by
\begin{equation}\label{eqn:topvert}
C_{\vec\nu}(q)=\left(e^{\sqrt{-1}\lambda}\right)^{\frac{-1}{2}\left(\sum_{i=1}^3\kappa(\nu^i)\frac{w_{i+1}}{w_i}\right)}\sum_{|\mu^i|=|\nu^i|}(-\sqrt{-1})^{l(\vec\mu)}V_{\vec\mu}^{\bullet}(\lambda;\vec{w})\prod_{i=1}^3\chi_{\nu^i}(\mu^i).
\end{equation}
where $\kappa(\nu)=2\sum_{(i,j)\in\nu}(j-i)$, $\chi_\nu$ is the character of $S_{|\nu|}$ indexed by $\nu$, and $q=e^{\sqrt{-1}\lambda}$ (c.f. Proposition 6.6 of \cite{lllz:mttv}).  


In \cite{orv:qcycc}, generating functions $P_{\vec\nu}(q)$ were defined by enumerating 3d partitions with prescribed asymptotics.  They proved the following identity relating their generating functions to the topological vertex in the representation basis at canonical framing.
\begin{equation}\label{eqn:normvert}
\frac{P_{\vec\nu}(q)}{M(q)}=q^{-\frac{1}{2}||\vec\nu^t||^2}C_{\vec\nu}(1/q)
\end{equation}
where $M(q)=\prod_{k\geq 1}(1-q^k)^{-k}$ is the classical MacMahon function and $||\vec\nu||^2:=\sum(\nu_i^j)^2$.  Putting  together equations (\ref{eqn:topvert}) and (\ref{eqn:normvert}), we get the identity
\begin{equation}\label{eqn:mainvert}
\frac{P_{\vec\nu}(q)}{M(q)}=\left( e^{\sqrt{-1}\lambda} \right)^{\frac{1}{2}\left(\sum_{i=1}^3\kappa(\nu^i)\frac{w_{i+1}}{w_i}-||(\nu^i)^t||^2\right)}\sum_{|\mu^i|=|\nu^i|}(\sqrt{-1})^{l(\vec\mu)}V_{\vec\mu}^{\bullet}(\lambda;\vec{w})\prod_{i=1}^3\chi_{\nu^i}(\mu^i).
\end{equation}

\begin{remark}
Our formalism for the orbifold vertex generalizes $V_{\vec\mu}^{\bullet}(\lambda;\vec{w})$ whereas the orbifold vertex formalism of Bryan, Cadman, and Young generalizes $\frac{P_{\vec\nu}(q)}{M(q)}$.  A relation between the two formalisms (the GW/DT correspondence for toric CY orbifolds) should generalize equation (\ref{eqn:mainvert}).
\end{remark}


\subsection{$A_1$ Singularity}\label{sec:a1all}

In this section, we consider the one leg orbifold vertex of $\X=[\C^3/\Z_2]$. It is more convenient in what follows to write $\Z_2=\{1,-1\}$ multiplicatively, we warn the reader that this differs from the conventions used thus far.  We consider both the case where the disks map to an effective or the ineffective leg.  We compute expressions for the vertex using specialized choices of weights.

\subsubsection{Effective Leg}\label{sec:a1ef}

We first consider open invariants for which the only nonempty partition $\mu$ is along an effective leg.  Equip $\X$ with a torus action with weights $(w_1,w_2,w_3)=(1,-1,0)$ where the ineffective leg is designated by the last coordinate and the nonempty partitions are along the leg designated by the first coordinate, which we orient outward.  Analyzing the restriction of the obstruction theory via the specialized weights $\vec w$, we see that the contribution from connected loci with more than one disk vanish.  Furthermore, since the disks are mapping to an effective leg, the twisting is uniquely determined by the winding.  Therefore, the connected vertex contributions are indexed by $\mu=(d,\bar d)$ where $\bar d:=(-1)^d\in\Z_2$.  From Definition \ref{def:vertex}, we compute the contribution from connected loci with genus $g$ and $n$ twisted marked points to be
\[
V_{\X,g,n,(d,\bar d)}(\vec w)=\delta_{\bar n, \bar d}2(-1)^{d-1}\left(\frac{d}{2} \right)^{\delta_{1,\bar d}}\hspace{-.3cm}D_{\bar d}^+(d,\vec w)\int_{\M_{g;n,1}(\B\Z_2)\cap ev_{n+1}^*(\bar d)}\hspace{-1.5cm}\frac{e^{eq}\left( \mathbb{E}_{-1}^{\vee}(1)\oplus\mathbb{E}_{-1}^{\vee}(-1)\oplus\mathbb{E}_{1}^{\vee}(0) \right)}{\left(\frac{2}{d}-\psi\right)}
\]
where the disk function simplifies
\[
D_{\bar d}^+(d,\vec w)=\frac{(-1)^{\lfloor \frac{d-1}{2} \rfloor}}{d}\left(\frac{2}{d}\right)^{\delta_{1,\bar d}}.
\]
$\M_{g;n,1}(\B\Z_2)$ denotes the locus of $n+1$ stable maps where the first $n$ points are twisted, $ev_i^*(\pm 1)$ denotes pullback of $\fc_{\pm 1}$, and the term $\delta_{\bar n, \bar d}$ reflects the fact that the moduli space only exists if $n$ and $d$ have the same parity.  The integral simplifies
\begin{align*}
\int_{\M_{g;n,1}(\B\Z_2)\cap ev_{n+1}^*(\bar d)}&\hspace{-1cm}\frac{e^{eq}\left( \mathbb{E}_{-1}^{\vee}(1)\oplus\mathbb{E}_{-1}^{\vee}(-1)\oplus\mathbb{E}_{1}^{\vee}(0) \right)}{\left(\frac{2}{d}-\psi\right)}=
\int_{\M_{g;n,1}(\B\Z_2)\cap ev_{n+1}^*(\bar d)}\frac{(-1)^{\lfloor\frac{n-1}{2}\rfloor}\lambda_g}{\prod \left(\frac{2}{d}-\psi\right)}\\
&=(-1)^{\lfloor\frac{n-1}{2}\rfloor}\left( \frac{d}{2} \right)^{2g-1+n}\int_{\M_{g;n,1}(\B\Z_2)\cap ev_{n+1}^*(\bar d)}\lambda_g\psi^{2g-2+n}\\
&=(-1)^{\lfloor\frac{n-1}{2}\rfloor}\left( \frac{d}{2} \right)^{2g-1+n}2^{2g-1}\int_{\M_{g,n,1}}\lambda_g\psi^{2g-2+n}\\
&=\frac{(-1)^{\lfloor\frac{n-1}{2}\rfloor}d^{2g-1+n}}{2^n}b_g
\end{align*}
where the first equality uses the $\Z_2$-Mumford relation (\cite{bgp:crc}, Proposition 3.2), the second uses the fact that the map $\M_{g;n,1}(\B\Z_2)\cap ev_{n+1}^*(\bar d)\rightarrow \M_{g,n}$ has degree $2^{2g-1}$ if $n$ and $\bar d$ have the same parity (\cite{jpt:ahhi}, Lemma 6), and we define $b_g:=\int_{\M_{g,1}}\lambda_g\psi^{2g-2}$.  Therefore,

\[
V_{\X,g,n,(d,\bar d)}(\vec w)=\delta_{\bar n, \bar d}\frac{(-1)^{\lfloor \frac{d-1}{2} \rfloor+\lfloor\frac{n}{2}\rfloor}d^{2g-2+n}}{2^{n-1}}b_g
\]
Inserting the formal variables $\lambda^{2g-1}\frac{x^n}{n!}$, summing over all $g$ and $n$, and using the known generating function for $b_g$ (\cite{fp:hiagwt}), we compute
\begin{equation}\label{eqn:z2vertef}
V_{\X,\mu}(x,\lambda;\vec w)=
\end{equation}
\[
\begin{cases}
\frac{-\sqrt{-1}^d}{d}\frac{\cos(dx/2)}{\sin(d\lambda/2)} & \text{ if } \mu=((d,\bar d)) \text{ with } d \text{ even }\\
\frac{-\sqrt{-1}^{d-1}}{d}\frac{\sin(dx/2)}{\sin(d\lambda/2)} & \text{ if } \mu=((d,\bar d)) \text{ with } d \text{ odd }\\
0 & \text{ else}.
\end{cases}
\]

\subsubsection{Ineffective Leg}\label{sec:a1}

We now consider open invariants of $\X=[\C^3/\Z_2]$ for which the only nonempty partition $\mu$ is along the ineffective leg.  Equip $\X$ with a torus action with weights $(w_1,w_2,w_3)=(1,-1,0)$ where the ineffective leg is now designated by the first coordinate and oriented outward.   The genus 0 vertex was computed in \cite{cr:ogwcrc} (Prop 4.2):

\begin{equation}\label{eqn:z2vert}
V_{\X,0,\mu}(x;\vec w)=
\end{equation}
\[
\begin{cases}
\frac{(-1)^{d+1}}{2d^2} & \text{ if } \mu=((d,1))\\
\frac{(-1)^d}{|\text{Aut}(\vec\mu)|}\Bigg(\prod_{i=1}^n\frac{(2d_i-1)!!}{(2d_i)!!}\Bigg)\left(\frac{d^{n-2}}{dx^{n-2}}\frac{\sec^{2d}(x/2)}{2d}\right) & \text{ if } \mu=((d_1,-1),...,(d_n,-1))\\
0 & \text{ else}.
\end{cases}
\]
where $x$ tracks insertions of the twisted axis.  This result follows easily from Definition \ref{def:vertex} and the hyperelliptic Hodge integral identity proven in Section 2.3 of \cite{cr:ogwcrc}.

We can also compute the all genus vertex contribution $V_{\X,(1,-1)}(x;\vec w)$ using the following $\Z_2$-Hodge integral identity which we prove in Appendix \ref{sec:hhi}.

\begin{theorem}\label{hhi}
\[
\hspace{-.2cm}\sum_{g,n}\int_{\overline{\mathcal{M}}_{g;n,0}(\mathcal{B}\Z_2)}\hspace{-1cm}\frac{e^{eq}\left( \mathbb{E}_{1}^{\vee}(1)\oplus\mathbb{E}_{-1}^{\vee}(-1)\oplus\mathbb{E}_{-1}^{\vee}(0) \right)}{1-\psi}\frac{x^{n-1}}{(n-1)!}\lambda^{2g-1}=\frac{1}{2}\csc\left(\frac{\lambda}{2}\right)\tan\left(\frac{x}{2}\right).
\]
\end{theorem}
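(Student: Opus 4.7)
The plan is to establish the identity by virtual localization on a closed auxiliary orbifold Gromov--Witten computation, combined with an independent evaluation of the same invariant.

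First, I would recognize the LHS of Theorem \ref{hhi} as the Hodge-integral content of the winding-$1$ contribution to the one-leg vertex $V_{\X,((1,-1))}$ along the ineffective leg of $\X=[\C^3/\Z_2]$ with equivariant weights $(1,-1,0)$. Indeed, expanding the three equivariant Euler classes and the $\psi$-denominator, and collecting by powers of $\lambda$ and $x$, produces exactly the sum $\sum_{i,j,g,n}\int_{\M_{g;n,0}(\B\Z_2)}\lambda_{g-i}\hat\lambda_{\hat g}\hat\lambda_{\hat g-j}\psi^{i+j-1}\,\frac{x^{n-1}}{(n-1)!}\lambda^{2g-1}$ that appears on the LHS. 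So it suffices to exhibit a closed-form evaluation of this one-vertex contribution.

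Next, I would close up the ineffective leg: let $\mathcal{Z}$ be the total space of a $\Z_2$-equivariant Calabi--Yau $\so(-1)\oplus\so(-1)$ bundle over the football $\proj^1_{2,2}$, equipped with the $\C^*$ action extending the one on $\X$. Applying Theorem \ref{gluing} in total degree $1$ writes the equivariant GW potential $Z_\mathcal{Z}^\bullet$ as a product of two winding-$1$ vertex contributions glued along the twisted edge, together with the combinatorial edge factor $(d|G_e|)^{\rho}\rho!(-1)^{n(\Lambda_e)}$.

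Then, I would evaluate the same potential $Z_\mathcal{Z}^\bullet$ by an independent method. A natural route is the crepant resolution correspondence of \cite{cr:ogwcrc}: this identifies $Z_\mathcal{Z}^\bullet$, after an explicit change of variables and analytic continuation, with the all-genus equivariant GW potential of the smooth crepant resolution $\K_{\proj^1}\oplus\so_{\proj^1}$, whose value is available in closed form from classical topological vertex computations. Equating the two expressions for $Z_\mathcal{Z}^\bullet$ and ``extracting the square root'' (using that the two glued vertices are symmetric) should then produce the identity of Theorem \ref{hhi} up to the explicit disk-function prefactor $D^+_{-1}(1;\vec w)$ and the edge sign.

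The principal obstacle I anticipate is the independent closed-form evaluation, i.e.\ the justification that $Z_\mathcal{Z}^\bullet$ can indeed be evaluated in closed form without invoking the conjectural GW/DT orbifold correspondence, which would be circular in this paper. A secondary challenge is a careful bookkeeping with the $\Z_2$-Mumford relation (\cite{bgp:crc}, Proposition 3.2) and the unstable-case conventions in Definition \ref{def:vertex}, so that the extraction of a single vertex contribution from the glued two-vertex product is unambiguous and so that the factored form $\tfrac{1}{2}\csc(\lambda/2)\tan(x/2)$ cleanly emerges from the $\csc$/$\tan$ combinations furnished by the smooth-resolution potential.
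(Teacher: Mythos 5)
There is a genuine gap here, and it is exactly the one you flag as ``the principal obstacle'': your argument requires an independent, all-genus, closed-form evaluation of the equivariant potential $Z_{\mathcal{Z}}^{\bullet}$ of the local football (and in fact with arbitrarily many twisted insertions, since the variable $x$ in the theorem tracks twisted marked points), and no such evaluation is available without already knowing the theorem. The open crepant resolution result of \cite{cr:ogwcrc} that you propose to invoke is a genus-zero statement (it is quoted in Section \ref{sec:a1} only as Proposition 4.2 there, giving $V_{\X,0,\mu}$); the whole point of Theorem \ref{hhi} is to supply the all-genus winding-one ineffective vertex, which is precisely what an all-genus open CRC would hand you for free. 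The GW/DT route is conjectural in this paper, as you note. There is also a secondary problem with ``extracting the square root'': the degree-one coefficient of $Q_e$ in Theorem \ref{gluing} is a \emph{sum} over admissible triples $(1,k,k')$ of products of two one-leg vertices sitting at the two fixed points, and the two factors carry different weight vectors $\vec{w}$ and $\vec{w}'$ and opposite orientations ($D^+$ versus $D^-$), so even granting a closed form for the total you cannot isolate a single vertex factor without further input.

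The paper's proof avoids the need for any external evaluation by localizing an integral that is \emph{zero for dimension reasons}: it sets $I_{g,n}:=\int_{\M_{g;n-1,0}(\G,1)}e(-R^{\bullet}\pi_*f^*(\so(-1/2)\oplus\so(-1/2)))$, where $\G$ is the nontrivial $\Z_2$-gerbe over $\proj^1$, observes that the integrand has degree one less than the dimension of the moduli space, and then computes $0=I_{g,n}$ by Atiyah--Bott localization. After three vanishing lemmas (themselves proved by localizing further auxiliary vanishing integrals and using the Faber--Pandharipande $\lambda_g$ generating function) kill all but a one-parameter family of fixed loci, the localization relation becomes a solvable recursion $F_{g,n}=-\sum_{k\geq 1}\frac{(-1)^{k}}{2^{2k}}\binom{n-1}{2k}F_{g,n-2k}-\frac{(-1)^{n/2}}{2^{n-1}}b_g$, whose generating-function solution is $F_g(x)=\tan(x/2)\,b_g$; the theorem then follows from $\sum_g b_g\lambda^{2g-1}=\frac{1}{2}\csc(\lambda/2)$. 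If you want to salvage your strategy, you must replace the crepant-resolution input with something unconditional---which is effectively what the dimensional-vanishing trick accomplishes.
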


The remaining terms in the definition of $V_{\X,g,(1,-1)}(\vec w)$ are easily seen to contribute a factor of $-1/2$.  Therefore, Theorem \ref{hhi} implies
\begin{equation}\label{d1twist}
V_{\X,(1,-1)}(x,\lambda;\vec w)=\frac{-1}{4}\csc\left(\frac{\lambda}{2}\right)\tan\left(\frac{x}{2}\right).
\end{equation}

Finally, we compute $V_{\X,(d,1)}(x;\vec w)$.  By analyzing the obstruction theory, we see that untwisted nodes survive the $0$ weight only if they are attached to a contracted component with no twisted marks.  Further, the map from the contracted component to $\B\Z_2$ must correspond to the trivial \'{e}tale $\Z_2$ cover.  Therefore, these fixed loci are isomorphic to $\frac{1}{2}[\M_{g,1}]$ where $g$ is the genus of the contracted component and the factor of $\frac{1}{2}$ corresponds to the hyperelliptic involution.  Furthermore, in this case $\mathbb{E}_1\cong\mathbb{E}_{-1}$.  Applying the Mumford relation to the integrand in Definition \ref{def:vertex} simply leaves the $d^{2g-2}\lambda_g\psi^{2g-2}$.  Putting this together with the rest of the contribution in Definition \ref{def:vertex} we compute
\[
V_{\X,g,(d,1)}(\vec w)=\frac{(-1)^{d-1}d^{2g-2}}{2}b_g
\]
Inserting the formal variable $\lambda^{2g-1}$ and summing over all $g$, we compute
\begin{equation}\label{d1untwist}
V_{\X,(d,1)}(x,\lambda;\vec w)=\frac{(-1)^{d-1}}{4d}\csc\left( \frac{d\lambda}{2} \right).
\end{equation}

We will return to these computations in the following section.

\section{Applications}\label{sec:apps}

\subsection{Gromov-Witten/Donaldson Thomas Correspondence}\label{sec:gwdt}

The GW/DT conjecture first appeared in \cite{mnop:gwdt1} and \cite{mnop:gwdt2}.  Essentially, the conjecture states that the reduced GW and DT theories of a smooth 3-fold are equivalent after a change in formal variables.  In \cite{moop:gwdtc}, the conjecture was proven for smooth toric 3-folds and one consequence is the equivalence of the vertex theories mentioned in Section \ref{sec:smooth}.  In \cite{bcy:otv}, an orbifold GW/DT correspondence was suggested.  In this section, we give evidence that suggests that the orbifold GW/DT correspondence should be witnessed by the orbifold vertex.  In the present work, we content ourselves with the specific formulation of the correspondence for the one-leg $\Z_2$ vertex.  We leave the more general formulation of the orbifold vertex correspondence for future work.

\subsubsection{Effective Leg}\label{ssec:a1ef}

Denote by $V_{\mu}$ the effective one-leg $\Z_2$ vertex computed with the particular weights of Section \ref{sec:a1ef} and $P_{\nu}$ the corresponding reduced, multi-regular DT vertex appearing in Example 4.2 of \cite{bcy:otv}.  Note that $\mu$ and $\nu$ are simply partitions, since the isotropy along the effective leg is trivial.  As usual, we use $\chi_\nu(\mu)$ to denote the value of the character indexed by $\nu$ at the conjugacy class indexed by $\mu$.

\begin{theorem}\label{effectivecor}
After the identifications $q_0=e^{\sqrt{-1}(\lambda-x)}$ and $q_1=-e^{\sqrt{-1}x}$,
\[
P_{\nu}(q_0,q_1)=\left(e^{\sqrt{-1}\lambda}\right)^{B_0(\nu)}\left(-e^{\sqrt{-1}x}\right)^{B_1(\nu)}\sum_{|\mu|=|\nu|}\sqrt{-1}^{l(\mu)}V_{\mu}^{\bullet}(x,\lambda) \chi_{\nu}(\mu)
\]
where $B_0(\nu):=-\frac{|\nu|}{2}-\sum_{(i,j)\in\nu}\lfloor \frac{i}{2} \rfloor$ and $B_1(\nu):=\sum_{(i,j)\in\nu}\lfloor \frac{i+1}{2} \rfloor-B_0(\nu)$.
\end{theorem}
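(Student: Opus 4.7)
The plan is to reduce both sides of Theorem \ref{effectivecor} to explicit Schur function evaluations and match them. By inspection of (\ref{eqn:z2vertef}), the connected vertex $V_{\X,\mu}$ vanishes unless $\mu=((d,\bar d))$ consists of a single pair, so the exponential formula $V^\bullet=\exp(V)$ yields
\[
V_\mu^\bullet(x,\lambda)=\frac{1}{|\aut(\mu)|}\prod_{i=1}^{l(\mu)}V_{((d_i,\bar d_i))}(x,\lambda;\vec w)
\]
for $\mu=(d_1,\ldots,d_{l(\mu)})$. Using $z_\mu=|\aut(\mu)|\prod d_i$ and the Frobenius identity $s_\nu=\sum_\mu \chi_\nu(\mu)p_\mu/z_\mu$, the character sum on the right of the theorem equals $s_\nu$ evaluated at the specialization in which the $d$-th power sum equals $t_d:=d\sqrt{-1}\,V_{((d,\bar d))}(x,\lambda;\vec w)$. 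Setting $\alpha:=\sqrt{-1}\,e^{\sqrt{-1}x/2}$ and $v:=e^{\sqrt{-1}\lambda/2}$, a short calculation (splitting $d$ even/odd to absorb the $\sqrt{-1}^d$ prefactors in (\ref{eqn:z2vertef})) yields the uniform closed form
\[
t_d=(-1)^d\,\frac{\alpha^d+\alpha^{-d}}{v^d-v^{-d}}.
\]
Under the stated substitution, $\alpha^2=q_1$ and $v^2=-q_0q_1$, so $t_d$ is an explicit rational function of $q_0,q_1$.

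Next, I would unpack the reduced multi-regular one-leg vertex from Example 4.2 of \cite{bcy:otv}: the BCY enumeration of $\Z_2$-colored 3d partitions with asymptotic $\nu$ along an effective leg realizes $P_\nu(q_0,q_1)$ as a framing monomial times a Schur function $s_\nu$ of variables read off from the colored vertex operator / hook-length product. The theorem then reduces to two independent matchings. First, the framing monomial should equal $(-q_0q_1)^{B_0(\nu)}q_1^{B_1(\nu)}$; one verifies this by directly counting the boxes of each $\Z_2$-color in the minimum colored plane partition with asymptotic $\nu$, recovering $B_0(\nu)=-|\nu|/2-\sum_{(i,j)\in\nu}\lfloor i/2\rfloor$ and $B_1(\nu)=\sum_{(i,j)\in\nu}\lfloor (i+1)/2\rfloor-B_0(\nu)$. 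Second, the $d$-th power sum of the Schur variables on the BCY side must equal $t_d$ after the change of variables.

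The main obstacle is this power-sum matching. It requires translating the BCY Schur specialization (encoded by $\Z_2$-colored box counts and MacMahon-type infinite products) into an evaluation of power sums and verifying the identity with $(-1)^d(\alpha^d+\alpha^{-d})/(v^d-v^{-d})$. The sign $(-1)^d$ should reflect the two-color grading in the BCY enumeration, with even $d$ contributing symmetrically between the two colors and odd $d$ anti-symmetrically; careful bookkeeping is required to reconcile this with BCY's conventions. Once the power-sum identity is established, $s_\nu(\text{BCY variables})=s_\nu[t_1,t_2,\ldots]$, and Theorem \ref{effectivecor} follows by combining with the framing-monomial match.
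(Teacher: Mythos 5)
Your proposal is correct and follows essentially the same route as the paper: the paper likewise quotes the BCY closed form $P_{\nu}(-q_0,q_1)=q_0^{B_0(\nu)+|\nu|/2}q_1^{B_1(\nu)+B_0(\nu)}s_{\nu'}(q_{\bullet})$ (so the framing monomial needs no separate box count), converts the character sum into a Schur function via Frobenius/Cauchy, and matches power sums against the closed-form one-part vertex — your $t_d=d\sqrt{-1}\,V_{((d,\bar d))}$ agrees with the paper's coefficient $(-1)^{d-1}q_0^{d/2}\frac{1+q_1^d}{1-q^d}$ of $p_d(\vec y)/d$. The only caution is that BCY's formula involves $s_{\nu'}$, not $s_\nu$, so the power-sum comparison must pass through the dual Cauchy identity (equivalently the involution $\omega$); that transpose, together with BCY's $-q_0$ normalization, is the true source of the $(-1)^{d}$ in your $t_d$, rather than the two-color grading you cite.
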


\begin{proof}
From \cite{bcy:otv}, we know
\[
P_{\nu}(-q_0,q_1)=q_0^{B_0(\nu)+\frac{|\nu|}{2}}q_1^{B_1(\nu)+B_0(\nu)}s_{\nu'}(q_{\bullet})
\]
where $s_{\nu'}$ is the Schur function corresponding to the conjugate of $\nu$ and 
\[
q_{\bullet}=(1,q_1,q_0q_1,q_0q_1^2,q_0^2q_1^2,...)=(1,q_1,q,q_1q,q^2,q_1q^2,...)
\]
where $q:=q_0q_1$.
Therefore, we must show
\begin{equation}\label{eqn:schur1}
q_0^{\frac{1}{2}|\nu|}s_{\nu'}(q_{\bullet})=\sum_{|\mu|=|\nu|}\sqrt{-1}^{l(\mu)}V_{\mu}^{\bullet} \chi_{\nu}(\mu)
\end{equation}
after the change of variables $q_0=-e^{\sqrt{-1}(\lambda-x)}$ and $q_1=-e^{\sqrt{-1}x}$.  Define $p_i(\vec y)=y_0^i+y_1^i+y_2^i+...$.  Then (\ref{eqn:schur1}) is equivalent to showing that the disconnected GW vertex is given (after the change of variables) by
\begin{equation}\label{eqn:schur2}
\sum_{\mu\neq\emptyset}\sqrt{-1}^{l(\mu)}V_{\mu}p_\mu(\vec y)=\log\left( \sum_{d\geq 1} \sum_{|\mu|=|\nu|=d} q_0^{\frac{d}{2}}s_{\nu'}(q_{\bullet}) \frac{\chi_{\nu}(\mu)}{z_\mu} p_\mu(\vec y) \right)
\end{equation}
where $z_\mu$ is the order of the centralizer of $\mu$ in $S_d$.  Summing over $\mu$ and using the identity
\[
\sum_{|\mu|=d}\frac{\chi_{\nu}(\mu)}{z_\mu} p_\mu(\vec y)=s_{\nu}(\vec y),
\]
the right hand side of (\ref{eqn:schur2}) becomes
\begin{equation}\label{eqn:schur3}
\log\left( \sum_{d\geq 1} \sum_{|\mu|=d} q_0^{\frac{d}{2}}s_{\nu'}(q_{\bullet})s_{\nu}(\vec y) \right)
\end{equation}

Cauchy's identity implies that (\ref{eqn:schur3}) is equal to
\begin{align}
\log&\left( \prod_{i,j\geq 0}(1+q_0^{\frac{1}{2}}y_iq^j) \prod_{i,j\geq 0}(1+q_0^{\frac{1}{2}}q_1y_iq^j)\right)\\
&=\sum_{i,j\geq 0}\log(1+q_0^{\frac{1}{2}}y_iq^j)+\sum_{i,j\geq 0}\log(1+q_0^{\frac{1}{2}}q_1y_iq^j)\\
&=\sum_{d\geq 1}(-1)^{d-1}\left( \sum_{i,j\geq 0}\frac{(q_0^{\frac{1}{2}})^d}{d}y_i^dq^{jd} +\sum_{i,j\geq 0}\frac{(q_0^{\frac{1}{2}}q_1)^d}{d}y_iq^{jd} \right)\\
&=\sum_{d \geq 1}(-1)^{d-1} \frac{q_0^{\frac{d}{2}}}{d}\frac{1+q_1^d}{1-q^d}p_d(\vec y).\label{eqn:schur4}
\end{align}

Making the prescribed change of variables and simplifying, we see that the coefficient of $p_d(\vec y)$ in (\ref{eqn:schur4}) is equal to
\[
\begin{cases}
\frac{-\sqrt{-1}^{d+1}}{d}\frac{\cos(x/2)}{\sin(d\lambda/2)} & \text{ if } \mu=((d,\bar d)) \text{ with } d \text{ even }\\
\frac{-\sqrt{-1}^{d}}{d}\frac{\sin(x/2)}{\sin(d\lambda/2)} & \text{ if } \mu=((d,\bar d)) \text{ with } d \text{ odd }
\end{cases}
\]
which is exactly $\sqrt{-1}\cdot V_{\mu}$ where $V_{\mu}$ was derived in Section \ref{sec:a1ef}.

\end{proof}

\subsubsection{Ineffective Leg}\label{ssec:a1in}

In this section, denote by $V_{\mu}$ the ineffective one-leg $\Z_2$ vertex computed with the particular weights of Section \ref{sec:a1} and $P_{\nu}$ the corresponding reduced, multi-regular DT vertex appearing in Example 4.3 of \cite{bcy:otv}.  Note that $\mu$ and $\nu$ are not simply partitions, but decorated partitions which we now describe.

Recall that $\mu$ is given by a set of pairs
\[
\mu=\{(d_1,k_1),...,(d_n,k_n)\}
\]
with $k_i\in \Z_2=\{\pm 1\}$.  
We can abbreviate $\mu$ by
\[
\mu=\{d_1k_1,...,d_nk_n\}.
\]
If $d=\sum d_i$, then $\mu$ naturally corresponds to a conjugacy class in the wreath product $\Z_2\wr S_d$ (c.f. \cite{m:sfhp}).  Define the twisted length of $\mu$ by $\hat{l}(\mu):=\sum_{ord(k_i)=j}j$.

In contrast, the contributions to the one-leg DT vertex computed in Example 4.3 of \cite{bcy:otv} are indexed by partitions, colored with 2 colors, satisfying a ``balanced'' condition.  We quickly review these notions here.  Consider a partition $\nu$ of $2d$ represented as a Young diagram in French notation in the first quadrant.  Define $\nu[k]:=\{(i,j)\in\nu:i-j=k \mod 2\}$ (one can think of coloring the boxes with $2$ colors).  Define a \textit{balanced} partition to be one for which $|\nu[0]|=|\nu[1]|$.

Given such a balanced partition $\nu$, we can construct an irreducible representation of $\Z_2\wr S_d$ as follows.  We begin by constructing the $2$-quotient of $\nu$ (c.f. Section 7.3 of \cite{bcy:otv}).  The $2$-core is empty because $\nu$ is balanced.  The $2$-quotient consists of 2 partitions, one labelled by each of the irreducible representations of $\Z_2$.  Concatenating the 2 partitions, we get a partition of $d$, where each part is labelled by an irreducible representation of $\Z_2$.  Following Appendix A of Chapter I of \cite{m:sfhp}, such a labelled partition corresponds exactly to an irreducible representation of $\Z_2\wr S_d$ and we denote the corresponding character by $\chi_{\nu}$.  

These identifications between the discrete data of our one leg orbifold vertices and the representation theory of the wreath product $\Z_2\wr S_d$ allow us to make the following precise conjecture relating the relevant GW and DT vertices.

\begin{conjecture}\label{conjgwdtin}
After the identifications $q_0=e^{\sqrt{-1}(\lambda-x)}$ and $q_1=-e^{\sqrt{-1}x}$,
\[
P_{\nu}(q_0,q_1)=\left(e^{\sqrt{-1}\lambda}\right)^{C_0(\nu)}\left(e^{\sqrt{-1}x}\right)^{C_1(\nu)}\sum_{2|\mu|=|\nu|}(\sqrt{-1})^{\hat{l}\mu}V_{\mu}^{\bullet}(x,\lambda) \chi_{\nu}(\mu).
\]
\hspace{-.1cm}where $C_0(\nu):=-\frac{1}{2}\sum_{(i,j)\in\nu[0]}2j+1$ and $C_1(\nu):=-\frac{1}{2}\sum_{(i,j)\in\nu[1]}2j+1-C_0(\nu)$.
\end{conjecture}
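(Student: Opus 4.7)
The plan is to prove Conjecture~\ref{conjgwdtin} by lifting the proof template of Theorem~\ref{effectivecor} from the representation theory of $S_d$ to that of the wreath product $\Z_2\wr S_d$. The strategy has three main steps: (i) derive a closed-form generating function for the disconnected GW side $\sum_{\mu} (\sqrt{-1})^{\hat{l}(\mu)} V_\mu^{\bullet}\, p_\mu(\vec y)$, where $p_\mu(\vec y)$ is the wreath-product power-sum indexed by the colored partition $\mu$ in colored variables $\vec y = (\vec y^{(0)}, \vec y^{(1)})$; (ii) extract a closed-form expression for the generating function $\sum_\nu q_0^{C_0(\nu)}q_1^{C_1(\nu)+C_0(\nu)} P_\nu(q_0,q_1)\, \mathfrak{s}_\nu(\vec y)$ from Example~4.3 of \cite{bcy:otv}, where $\mathfrak{s}_\nu$ is the wreath-product Schur function associated to the balanced partition $\nu$ via its $2$-quotient; and (iii) match the two series via the wreath-product Cauchy identity recorded in Appendix~A of Chapter~I of \cite{m:sfhp}, together with column-orthogonality of wreath-product characters.

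For step~(i), I would decompose $\mu=\mu_+\cup \mu_-$ into its untwisted and twisted parts, apply the $\Z_2$-Mumford relation, and pull out the disk functions computed in Section~\ref{sec:vertex}. The residual integrand reduces to a universal combination of classes of the form
\[
\int_{\M_{g;n,m}(\B\Z_2)\cap ev^*(\vec{k})} \lambda_g\, \hat\lambda_{\hat g}\, \hat\lambda_{\hat g-j}\, \psi_1^{a_1}\cdots \psi_{n+m}^{a_{n+m}},
\]
weighted by rational functions in the windings. The necessary closed-form evaluation is a multi-marked-point generalization of Theorem~\ref{hhi}, whose generating function I expect to collapse, after the substitution $q_0=e^{\sqrt{-1}(\lambda-x)}$, $q_1=-e^{\sqrt{-1}x}$, into an elementary trigonometric expression whose framing prefactors match $C_0(\nu)$ and $C_1(\nu)$. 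For step~(ii), the $2$-quotient description of balanced partitions recalled in the excerpt should allow one to rewrite $P_\nu(q_0,q_1)$ as a principal specialization of $\mathfrak{s}_\nu$ in a pair of geometric sequences built from $q_0$ and $q_1$, parallelling the Schur function formula used in the proof of Theorem~\ref{effectivecor}. Step~(iii) is then a direct log-expansion: the Cauchy identity rewrites $\sum_\nu \mathfrak{s}_\nu(\vec x)\mathfrak{s}_\nu(\vec y)$ as an infinite product over the two colors, whose logarithm is a sum of wreath power-sums that must agree term-by-term with the generating function produced in step~(i).

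The main obstacle will be step~(i): the Hodge integral identities needed are a substantial extension of Theorem~\ref{hhi} to arbitrary windings and arbitrary numbers of twisted and untwisted marked points carrying $\psi$-classes. I expect these to be provable by the same Atiyah--Bott strategy used in Appendix~\ref{sec:hhi}, applied to an enriched auxiliary target such as a gerbe over $\K_{\proj^1_{2,2}}\oplus \so_{\proj^1_{2,2}}$ with multiple outgoing legs, but the bookkeeping of fixed loci with multiple disks at each gerby point and the wreath combinatorics of the boundary conjugacy classes will be significantly heavier than in the one-disk case of the appendix. A potential alternative route that sidesteps direct Hodge integration is the open crepant resolution correspondence of \cite{cr:ogwcrc}: since the GW/DT correspondence is known for $\K_{\proj^1}\oplus \so_{\proj^1}$, transporting both sides through the CRC change of variables should yield Conjecture~\ref{conjgwdtin} once one verifies that the CRC substitution is compatible with the wreath-product character sum on the GW side. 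Pursued in parallel, these two approaches appear sufficient to establish the full conjecture.
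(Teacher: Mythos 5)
The statement you are addressing is stated in the paper as a \emph{conjecture}, and the paper does not prove it: it only establishes the special cases recorded in Theorem \ref{gwdtin} ($d=1$ in full, $d=2$ for nonpositive powers of $\lambda$, $d=3$ for negative powers of $\lambda$), and it does so by brute force --- writing down the explicit partition functions $V_\mu^\bullet$ and $P_\nu'$ and the character tables of $\Z_2\wr S_d$ for $d\le 3$ and checking the identity coefficient by coefficient (with Maple assistance for $d=2,3$). Your proposal instead outlines a structural proof of the full conjecture modelled on the proof of Theorem \ref{effectivecor}. That is a reasonable programme, but it is a programme, not a proof, and the gap sits exactly where you locate it: step (i). The closed-form evaluation of $V_\mu^\bullet(x,\lambda)$ for arbitrary colored partitions $\mu$ requires all-genus $\Z_2$-Hodge integral identities with arbitrarily many twisted and untwisted marked points carrying $\psi$-classes and arbitrary windings. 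The paper possesses such closed forms only for $\mu=((d,1))$ and $\mu=((1,-1))$ (equations (\ref{d1untwist}) and (\ref{d1twist}), the latter resting on the new identity of Theorem \ref{hhi} proved in the appendix); for every other $\mu$ along the ineffective leg only the genus-$0$ term plus at most one correction is available, which is precisely why Theorem \ref{gwdtin} is restricted to low powers of $\lambda$. Your "multi-marked-point generalization of Theorem \ref{hhi}" is therefore not a bookkeeping extension but the substantive open problem; without it your Cauchy-identity matching in step (iii) has nothing to match against. Indeed, the paper runs the logic in the opposite direction in Section \ref{sec:pred}: it \emph{assumes} the conjecture to predict those very Hodge integrals.

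Two further cautions. First, your alternative route through the open crepant resolution correspondence of \cite{cr:ogwcrc} does not close the gap, because the open CRC established there is a genus-$0$ statement (the paper cites it only for the genus-$0$ vertex, equation (\ref{eqn:z2vert})), whereas the conjecture is an all-genus identity. Second, on the DT side, while the product formulas of Example 4.3 of \cite{bcy:otv} for $d\le 3$ are consistent with a principal-specialization formula for a wreath Schur function, you would still need to establish that specialization uniformly in $\nu$ and verify that the resulting framing exponents reproduce $C_0(\nu)$ and $C_1(\nu)$; this is plausible but not automatic from the balanced/$2$-quotient combinatorics you quote. As written, your proposal should be presented as a strategy for attacking the conjecture, not as a proof of it.
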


As evidence for Conjecture \ref{conjgwdtin}, we prove the following.

\begin{theorem}\label{gwdtin}
Conjecture \ref{conjgwdtin} is true if
\begin{itemize}
\item $d=1$,
\item $d=2$ and the power of $\lambda$ is nonpositive, or
\item $d=3$ and the power of $\lambda$ is negative.
\end{itemize}
\end{theorem}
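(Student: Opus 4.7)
The plan is to verify each of the three cases by direct computation, leveraging the explicit winding 1 all-genus formulas obtained in Section \ref{sec:a1} together with the Schur function expressions for $P_\nu$ from Example 4.3 of \cite{bcy:otv}.

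For $d=1$, only the partitions $\mu=\{(1,1)\}$ and $\mu=\{(1,-1)\}$ are in play, with connected vertices given by equations (\ref{d1untwist}) and (\ref{d1twist}); of these, only (\ref{d1twist}) genuinely uses Theorem \ref{hhi}, while (\ref{d1untwist}) follows from the $\Z_2$-Mumford relation and the classical generating function for $b_g$. The disconnected vertex is then recovered by exponentiation. On the DT side I enumerate the balanced partitions $\nu$ of $2$, extract their $2$-quotients, read off the corresponding $\Z_2\wr S_1$-characters via the prescription in Appendix A of Chapter I of \cite{m:sfhp}, and substitute the explicit $P_\nu(q_0,q_1)$ from Example 4.3 of \cite{bcy:otv}. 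After the substitution $q_0=e^{\sqrt{-1}(\lambda-x)}$, $q_1=-e^{\sqrt{-1}x}$ matching the two sides becomes a finite computation, with the prefactor $(e^{\sqrt{-1}\lambda})^{C_0(\nu)}(e^{\sqrt{-1}x})^{C_1(\nu)}$ absorbing the asymmetry between the $\csc$ and $\tan$ factors.

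For $d=2$ and $d=3$, the constraint on the power of $\lambda$ severely restricts the contributing genera. Because a fixed locus $F(g,\vec\mu)$ contributes $\lambda^{2g-2+n}$, the coefficient of $\lambda^k$ in the disconnected vertex $V_\mu^\bullet$ receives only finitely many contributions once $k$ is fixed. Enumerating partitions of $d=2,3$ into connected pieces and tracking the Euler characteristic exponents shows that all relevant connected contributions in the prescribed range of $\lambda$-powers are either covered by the genus zero formulas of equation (\ref{eqn:z2vert}) or by the all-genus winding 1 expressions (\ref{d1untwist}), (\ref{d1twist}). For each balanced $\nu$ with $|\nu|=4$ or $6$, I then compute $s_{\nu'}(q_\bullet)$ with the appropriate $\Z_2$-coloring of the $2$-quotient, expand in $\lambda$ after the change of variables, and match coefficient-by-coefficient against the finite sum $\sum_{2|\mu|=|\nu|}\sqrt{-1}^{\hat{l}(\mu)} V_\mu^\bullet \chi_\nu(\mu)$ restricted to the admissible $\lambda$-powers.

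The main obstacle is bookkeeping rather than anything conceptually new: accurately computing the wreath-product characters $\chi_\nu(\mu)$ via the $2$-quotient procedure, tracking the framing shifts $C_0(\nu)$ and $C_1(\nu)$, and handling the signs introduced by $q_1=-e^{\sqrt{-1}x}$ and by the passage from reduced to multi-regular DT generating functions. A natural attempt to sidestep this case-by-case argument would be to seek a Cauchy-style identity for $\Z_2\wr S_d$ mirroring the derivation (\ref{eqn:schur1})--(\ref{eqn:schur4}) of Theorem \ref{effectivecor}, which would in principle settle all $d$ in winding 1 uniformly. The difficulty is that balanced partitions correspond only to a proper subset of the irreducibles of $\Z_2\wr S_d$, so an all-at-once Cauchy approach does not directly apply; this is ultimately the reason that the theorem is currently restricted to the three cases stated, where direct verification is tractable.
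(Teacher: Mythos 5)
Your proposal follows essentially the same route as the paper: the paper proves the theorem by direct verification, listing the explicit GW partition functions $V_\mu^\bullet$ (built from the genus-zero formula, the all-genus $(d,\pm 1)$ expressions, and Theorem \ref{hhi}), the DT generating functions $P_\nu$ from Example 4.3 of \cite{bcy:otv}, and the character tables of $\Z_2\wr S_d$ for $d=1,2,3$, then matching coefficients after the change of variables (the $d=1$ case by hand via trigonometric identities, the $d=2,3$ cases in the admissible range of $\lambda$-powers by a Maple computation). Your observation about why the $\lambda$-power restrictions make the finitely many needed contributions computable, and why a Cauchy-style identity does not carry over from the effective case, accurately reflects the reason the theorem is stated only for these cases.
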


\begin{proof}

\noindent\underline{$\mathbf{d=1}$}:

We first consider the degree $1$ case.  The GW parition functions are
\[
V_{(1)}^\bullet=\frac{1}{4}\csc\left(\frac{\lambda}{2}\right),
\]
\[
V_{(-1)}^\bullet=\frac{-1}{4}\csc\left(\frac{\lambda}{2}\right)\tan\left(\frac{x}{2}\right),
\]\\

The DT partition functions are 
\[
P_{(1,1)}'=P_{(2)}'=\frac{1}{(1+q_0q_1)(1-q_1)}
\]\\

\begin{table}[h]\label{table1}
\caption{The character table of $\Z_2\wr S_1\cong \Z_2$.}
\begin{tabular} {c|cc}
&(1)&(-1)\\
\hline
(1,1)&1&1\\
(2)&1&-1\\
\end{tabular}
\end{table}

The relevant character table is that of $\Z_2$ and the identifications of the indices are given in Table 1.
Therefore the conjecture states that
\[
\frac{1}{(1-e^{\sqrt{-1}\lambda})(1+e^{\sqrt{-1}x})}=e^{-\sqrt{-1}\lambda/2}\left( \sqrt{-1}V_{(1)}^{\bullet}-V_{(-1)}^{\bullet} \right)
\]
and
\[
\frac{1}{(1-e^{\sqrt{-1}\lambda})(1+e^{\sqrt{-1}x})}=e^{-\sqrt{-1}(\lambda/2+x)}\left( \sqrt{-1}V_{(1)}^{\bullet}+V_{(-1)}^{\bullet} \right).
\]
These identities can be checked with basic trigonometric identities.  Therefore, the conjecture is proven in degree one.\\

\noindent\underline{$\mathbf{d=2}$}:

In degree two, we do not have the luxury of closed forms for all genus.  Therefore, we must make some approximations for the GW partition functions:
\[
V_{(2)}^{\bullet}=\frac{-1}{8}\csc(\lambda)
\]
\[
V_{(-2)}^{\bullet}=\frac{3}{32}\int\sec^4(x/2)dx\lambda^{-1} + O(\lambda)
\]
\[
V_{(1,1)}^{\bullet}= \frac{1}{32}\csc^2(\lambda/2)
\]
\[
V_{(1,-1)}^{\bullet}= \frac{-1}{16}\csc^2(\lambda/2)\tan(x/2)
\]
\[
V_{(-1,-1)}^{\bullet}=\frac{1}{32}\csc^2(\lambda/2)\tan^2(x/2)+\frac{1}{32}\sec^4(x/2)+O(\lambda^2)
\]\\

The DT partition functions are
\[
P_{(4)}'=P_{(1,1,1,1)}'=\frac{1}{(1-q_0^2q_1^2)(1+q_0q_1^2)(1+q_0q_1)(1-q_1)}
\]
\[
P_{(3,1)}'=P_{(2,1,1)}'=\frac{1}{(1-q_0^2q_1^2)(1+q_0q_1)(1+q_0)(1-q_1)}
\]
\[
P_{(2,2)}'=\frac{1}{(1+q_0q_1^2)(1+q_0q_1)^2(1+q_0)}
\]\\

\begin{table}[h]\label{table2}
\caption{The character table of $\Z_2\wr S_2$.}
\begin{tabular} {c|ccccc}
&(2)&(-2)&(1,1)&(1,-1)&(-1,-1)\\
\hline
(3,1)&1&1&1&1&1\\
(4)&1&-1&1&-1&1\\
(1,1,1,1)&-1&-1&1&1&1\\
(2,2)&0&0&2&0&-2\\
(2,1,1)&-1&1&1&-1&1
\end{tabular}
\end{table}

The character table of $\Z_2\wr S_2$ along with the index identifications is given in Table 2.  Using this data it is not hard to check the conjecture in degree 2 for the coefficients of $\lambda^i$ with $i<1$.  A Maple file with these computations can be found at www.math.colostate.edu/\raise.17ex\hbox{$\scriptstyle\mathtt{\sim}$}ross/research.\\

\noindent\underline{$\mathbf{d=3}$}:

As one last bit of evidence, we ran the check in degree three.  The GW partition functions are
\[
V_{(3)}^\bullet=\frac{1}{12}\csc(3\lambda/2)
\]
\[
V_{(-3)}^\bullet=\frac{-5}{96}\int \sec^6(x/2)dx \lambda^{-1}+O(\lambda)
\]
\[
V_{(2,1)}^\bullet=\frac{-1}{32}\csc(\lambda)\csc(\lambda/2)
\]
\[
V_{(2,-1)}^\bullet=\frac{1}{32}\csc(\lambda)\csc(\lambda/2)\tan(x/2)
\]
\[
V_{(-2,1)}^\bullet=\frac{3}{128}\int\sec^4(x/2)dx\cdot\csc(\lambda/2)\lambda^{-1}+O(\lambda^0)
\]
\[
V_{(-2,-1)}^\bullet=\frac{-3}{128}\int\sec^4(x/2)dx\cdot\csc(\lambda/2)\tan(x/2)\lambda^{-1}+\frac{-1}{32}\sec^6(x/2)+O(\lambda^0)
\]
\[
V_{(1,1,1)}^\bullet=\frac{1}{384}\csc^3(\lambda/2)
\]
\[
V_{(1,1,-1)}^\bullet=\frac{-1}{128}\csc^3(\lambda/2)\tan(x/2)
\]
\[
V_{(1,-1,-1)}^\bullet=\frac{1}{128}\csc^3(\lambda/2)\tan^2(x/2)+\frac{1}{128}\csc(\lambda/2)\sec^4(x/2)+O(\lambda)
\]
\[
\hspace{-1.5cm}V_{(-1,-1,-1)}^\bullet=\frac{-1}{384}\csc^3(\lambda/2)\tan^3(x/2)+\frac{-1}{128}\csc(\lambda/2)\tan(x/2)\sec^4(x/2)+\frac{-1}{288}\sec^6(x/2)'\lambda+O(\lambda)
\]\\

The DT partition functions are
\[
P_{(6)}'=P_{(1,1,1,1,1,1)}'=\frac{1}{(1+q_0^3q_1^3)(1-q_0^2q_1^3)(1-q_0^2q_1^2)(1+q_0q_1^2)(1+q_0q_1)(1-q_1)}
\]
\[
P_{(5,1)}'=P_{(2,1,1,1,1)}'=\frac{1}{(1+q_0^3q_1^3)(1-q_0^2q_1^2)(1-q_0^2q_1)(1+q_0q_1)(1+q_0)(1-q_1)}
\]
\[
P_{(4,2)}'=P_{(2,2,1,1)}'=\frac{1}{(1-q_0^2q_1^3)(1-q_0^2q_1^2)(1+q_0q_1)^2(1+q_0)(1-q_1)}
\]
\[
P_{(4,1,1)}'=P_{(3,1,1,1)}'=\frac{1}{(1+q_0^3q_1^3)(1+q_0q_1^2)(1+q_0q_1)^2(1+q_0)(1-q_1)}
\]
\[
P_{(3,3)}'=P_{(2,2,2)}'=\frac{1}{(1-q_0^2q_1^2)(1-q_0^2q_1)(1+q_0q_1^2)(1+q_0q_1)^2(1-q_1)}
\]\\

\begin{table}[!]\label{table3}
\caption{The character table of $\Z_2\wr S_3$.}
\resizebox{12.5cm}{!}{
\begin{tabular} {c|cccccccccc}
&(3)&(-3)&(2,1)&(2,-1)&(-2,1)&(-2,-1)&(1,1,1)&(1,1,-1)&(1,-1,-1)&(-1,-1,-1)\\
\hline
(5,1)& 1& 1& 1& 1& 1& 1& 1& 1& 1& 1\\
(6)& 1&-1& 1&-1&-1& 1& 1&-1& 1&-1\\ 
(3,1,1,1)&-1&-1& 0& 0& 0& 0& 2& 2& 2& 2\\
(3,3)& 0& 0& 1&-1& 1&-1& 3& 1&-1&-3\\
(4,2)& 0& 0& 1& 1&-1&-1& 3&-1&-1& 3\\
(4,1,1)&-1& 1& 0& 0& 0& 0& 2&-2& 2&-2\\
(1,1,1,1,1,1)& 1& 1&-1&-1&-1&-1& 1& 1& 1& 1\\
(2,2,1,1)& 0& 0&-1& 1&-1& 1& 3& 1&-1&-3\\
(2,2,2)& 0& 0&-1&-1& 1& 1& 3&-1&-1& 3\\
(2,1,1,1,1)& 1&-1&-1& 1& 1&-1& 1&-1& 1&-1
\end{tabular}}
\end{table}

The character table of $\Z_2\wr S_3$ along with the index identifications is given in Table 3.  Using this data it is not hard to check the conjecture in degree 3 for the coefficients of $\lambda^i$ with $i<0$.  A Maple document with these computations can be found at www.math.colostate.edu/\raise.17ex\hbox{$\scriptstyle\mathtt{\sim}$}ross/research.

\end{proof}

\subsection{Predictions of Higher Genus $\Z_2$ Hodge Integrals}\label{sec:pred}

Hodge integrals are notoriously elusive to compute and package into generating functions.  G-Hodge integrals pose even more challenges.  One further application of the GW/DT vertex correspondence is the prediction of generating functions for certain G-Hodge integrals.  Ideally, one could verify the conjecture in genus $0$ where the integrals are often tractable then use the correspondence to compute the higher genus integrals.  In case $\X=[\C^3/\Z_2]$, we ran this program in degree $2$.

Define

\[
G(i,g):=\sum_{n}\int_{\overline{\mathcal{M}}_{g;n,0}(\mathcal{B}\Z_2)}\frac{e^{eq}\left( \mathbb{E}_{1}^{\vee}(1)\oplus\mathbb{E}_{-1}^{\vee}(-1)\oplus\mathbb{E}_{-1}^{\vee}(0) \right)}{\prod_{j=1}^i\frac{1}{d_j}-\psi_j}\frac{x^{n-i}}{(n-i)!}.
\]

We predict $G(i,g)$ for values $i=1, d_1=2$ and $i=2, d_1=d_2=1$ with genus $g$ ranging from $1$ to $3$.  To compute $G(1,1)$ for example, we consider the degree $2$, $\lambda^{1}$ part of Conjecture \ref{conjgwdtin}.  All of the contributions to the conjectural equation are known except for the contributions from a contracted genus $1$ curve with two attached disks.  The $\Z_2$-Hodge integrals that appear in these particular contributions are packaged exactly as in $G(1,1)$, therefore we can solve for a conjectural expression for the integrals.  Using analagous methods for the coefficients of different powers of $\lambda$, we compute the following.

\[
G(1,1)=\int\frac{-1}{12}\sec^4\left(\frac{x}{2}\right)+\frac{5}{24}\sec^6\left(\frac{x}{2}\right)dx\\
\]
\[
G(1,2)=\int\frac{-1}{240}\sec^4\left(\frac{x}{2}\right)-\frac{13}{288}\sec^6\left(\frac{x}{2}\right)+\frac{7}{96}\sec^8\left(\frac{x}{2}\right)dx\\
\]
\[
G(1,3)=\int\frac{-11}{30240}\sec^4\left(\frac{x}{2}\right)+\frac{1}{576}\sec^6\left(\frac{x}{2}\right)-\frac{1}{48}\sec^8\left(\frac{x}{2}\right)+\frac{3}{128}\sec^{10}\left(\frac{x}{2}\right)dx
\]
\[
G(2,1)=\frac{1}{16}\sec^6\left(\frac{x}{2}\right)\\
\]
\[
G(2,2)=\frac{-1}{192}\sec^6\left(\frac{x}{2}\right)+\frac{1}{64}\sec^8\left(\frac{x}{2}\right)\\
\]
\[
G(2,3)=\frac{1}{5760}\sec^6\left(\frac{x}{2}\right)-\frac{1}{384}\sec^8\left(\frac{x}{2}\right)+\frac{1}{256}\sec^{10}\left(\frac{x}{2}\right)\\
\]

\appendix
\section{Localization and Higher Genus $\Z_2$ Hodge Integrals}\label{sec:hhi}

We prove the identity of Theorem \ref{hhi}.  For $n\geq 2$, define

\[
F_{g,n}:=\int_{\overline{\mathcal{M}}_{g;n,0}(\mathcal{B}\Z_2)}\frac{e^{eq}\left( \mathbb{E}_{1}^{\vee}(1)\oplus\mathbb{E}_{-1}^{\vee}(-1)\oplus\mathbb{E}_{-1}^{\vee}(0) \right)}{1-\psi}.
\]
We use the convention that $F_{0,2}=\frac{1}{2}$.  We show:

\begin{equation}\label{ident}
\sum_{g,n} F_{g,n}\frac{x^{n-1}}{(n-1)!}\lambda^{2g-1}=\frac{1}{2}\csc\left(\frac{\lambda}{2}\right)\tan\left(\frac{x}{2}\right).
\end{equation}

\subsection{Localization Setup}

We will prove (\ref{ident}) by applying Attiyah-Bott localization to the following auxilary integral:

\begin{equation}\label{int}
I_{g,n}:=\int_{\M_{g;n-1,0}(\mathcal{G},1)}e(-R^{\bullet}\pi_*f^*(\so(-1/2)\oplus\so(-1/2)))
\end{equation}
where $\G$ is the nontrivial $\Z_2$ gerbe over $\proj^1$, i.e. $\G=\proj^{(\so(-1),2)}$ in the notation of Section \ref{sec:orbiline} and $\so(-1/2)=\sqrt{\so(-1)}$.

The integral $I_{g,n}$ vanishes by dimensional reasons.  Indeed the dimension of the moduli space is $2g+n-1$ whereas the orbifold Riemann-Roch formula implies that
\[
\chi(f^*(\so(-1/2)))=(1-g)-1/2-\frac{n-1}{2}
\]
showing that the degree of the integrand is $2g+n-2$.

We equip the target with a $\C^*$ action with the following choices of linearization
\begin{center}
\begin{tabular}{c|ccc}
&$T_{\G}$&$\so(-1/2)$&$\so(-1/2)$\\
\hline
0&1&-1&0\\
$\infty$&-1&-1/2&1/2
\end{tabular}
\end{center}

We can write the integral as a sum of contributions coming from the fixed loci.  The fixed loci are represented by maps to $\G$ where the source curve is the union of a teardrop (mapping with degree one) and two contracted components over $0$ and $\infty$ which support all of the marks and the genus.  In the next section, we show that many of the contributions from the fixed loci are zero.

\subsection{Vanishing Lemmas}

\begin{lemma}\label{vanish1}
If a node over $\infty$ is untwisted, then the contribution vanishes unless the contraction over $\infty$ parametrizes the trivial double cover.  In particular, all of the twisted marks must map to $\infty$.
\end{lemma}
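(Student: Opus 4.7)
The plan is to analyze the virtual localization contribution from the contracted component $C_\infty$ over $\infty$, exploiting the half-integer $\C^*$-linearizations $\pm 1/2$ on the two copies of $\so(-1/2)$ at $\infty$ to force a vanishing via a $\Z_2$-Mumford-type identity. First I would restrict the integrand to the sub-locus parametrizing maps with a specified combinatorial type and isolate the factor arising from $C_\infty$. Since $f|_{C_\infty}$ factors through the residual gerbe $\B\Z_2 \hookrightarrow \G$ at $\infty$, the pullback $f^*\so(-1/2)|_{C_\infty}$ is a $(-1)$-character trivial line bundle, and $R^{\bullet}\pi_* f^*\so(-1/2)|_{C_\infty} = -\mathbb{E}_{-1}$, where $\mathbb{E}_{-1}$ denotes the $(-1)$-character part of the Hodge bundle. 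Its rank (as in the $\hat g$ of Theorem \ref{hhi}) is strictly larger in the non-trivial-double-cover case than in the trivial case.

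Next, I would observe that the two copies of $\so(-1/2)$ contribute Euler classes shifted by the opposite equivariant weights $\pm t/2$, whose product takes the form
\[
e^{eq}\bigl(\mathbb{E}_{-1}(-t/2)\bigr)\cdot e^{eq}\bigl(\mathbb{E}_{-1}(+t/2)\bigr) \;=\; \prod_{i}\bigl(\alpha_i^2 - t^2/4\bigr),
\]
where $\alpha_i$ are the Chern roots of $\mathbb{E}_{-1}$. Applying the $\Z_2$-Mumford relation of \cite{bgp:crc} together with a careful analysis of the infinitesimal-automorphism and node-smoothing factors dictated by the untwisted-node hypothesis at $\infty$, I would conclude that this product forces the fixed-locus contribution to vanish whenever $\mathbb{E}_{-1}$ has positive rank, i.e.\ whenever the $\Z_2$-cover parametrized by $C_\infty$ is non-trivial. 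The key point is that a twisted node would supply a compensating half-weight in the denominator while an untwisted node does not, so in the untwisted case the extra $\mathbb{E}_{-1}$-classes produced by the Mumford relation survive and kill the contribution.

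For the ``in particular'' statement, once $C_\infty$ is forced to parametrize the trivial double cover, the global $\Z_2$-monodromy constraint on the source -- combined with the trivial monodromy contributed by the untwisted node at $\infty$ and the fixed monodromy structure of the teardrop and of any component over $0$ -- forces the twisted marks (whose $\Z_2$-monodromies are nontrivial by definition) to be supported precisely on the components mapping to $\infty$. The main obstacle will be to execute the $\Z_2$-Mumford cancellation rigorously in the equivariant setting with the fractional $\pm 1/2$-weights, and to verify that the node-smoothing and deformation contributions coming from the teardrop and from $C_0$ do not introduce compensating denominators that would disrupt the vanishing.
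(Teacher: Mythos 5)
Your central vanishing mechanism does not work. By the $\Z_2$-Mumford relation of \cite{bgp:crc}, the product you propose to exploit,
\[
e^{eq}\left(\mathbb{E}_{-1}^{\vee}(-t/2)\right)\cdot e^{eq}\left(\mathbb{E}_{-1}^{\vee}(t/2)\right)=\prod_i\left(\frac{t^2}{4}-\alpha_i^2\right)=\left(\frac{t}{2}\right)^{2\,\rk\mathbb{E}_{-1}},
\]
is a \emph{nonzero} equivariant constant; it cannot kill anything. Indeed this is precisely how the relation is used in Lemmas \ref{vanish2} and \ref{vanish3}, where the same factor $e^{eq}(\mathbb{E}_{-1}^{\vee}(-1/2)\oplus\mathbb{E}_{-1}^{\vee}(1/2))$ appears, does \emph{not} produce a vanishing, and an entirely separate argument (the auxiliary integrals $J_{g_2,n}$ together with the Faber--Pandharipande formula of \cite{fp:hiagwt}) is required. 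Your rank comparison is also backwards: for a connected unramified double cover $\rk\mathbb{E}_{-1}=g-1$, versus $g$ for the trivial cover. The actual argument is elementary and local to the node: in the normalization sequence, an \emph{untwisted} node contributes the full fiber of $f^*(\so(-1/2)\oplus\so(-1/2))$ as a skyscraper term, and one of the two copies is linearized with weight $0$ at the relevant fixed point. Since $H^0$ of the teardrop vanishes (the pullback has degree $-1/2$), that weight-$0$ line injects into $H^1$ of the whole source curve and forces $e^{eq}(H^1)=0$ in the numerator of $e^{eq}(-R^{\bullet}\pi_*f^*N)$ --- unless it is surjected onto by a section of the adjacent contracted component, i.e.\ by a $\Z_2$-anti-invariant constant, which exists precisely when that component parametrizes the trivial double cover. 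Triviality of the cover then forbids twisted (ramification) points on that component, which is the ``in particular'' clause.

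A further point you should have caught: with the linearization table as given, the weight-$0$ fiber occurs over $0$, not over $\infty$ (at $\infty$ the two copies carry weights $\mp 1/2$), and every subsequent invocation of this lemma --- Remark \ref{remark}, the proof of Lemma \ref{vanish2}, and the final computation --- applies it to untwisted nodes over $0$. Moreover the loci $\Gamma_k$ in the final proof have an untwisted node over $\infty$ attached to a rational component carrying $2k>0$ twisted marks (hence a nontrivial cover) and contribute nonzero amounts, so the statement taken literally at $\infty$ is false for this choice of weights; it should read ``over $0$.'' Your own reading of the ``in particular'' clause already signals this: if the cover over $\infty$ were trivial, the twisted marks would be forced \emph{away} from $\infty$, contradicting the conclusion you are trying to reach.
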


\begin{proof}
Analyzing the normalization sequence, one sees that the skyscraper sheaf at the untwisted node has a $\Z_2$ anti-invariant section, and thus contributes a $0$ weight.  The only way this $0$ weight can be cancelled is if the contracting curve also has a $\Z_2$ anti-invariant section and this occurs only if the contraction map parametrizes the trivial double cover.
\end{proof}



\begin{remark}\label{remark}
By Lemma \ref{vanish1}, the only nonzero contributions with untwisted nodes over $0$ come from maps where the contraction over $0$ parametrizes the trivial \'etale double cover of the contracting component.  We denote this locus by $\mathcal{T}$.  The \textit{coarse} locus T is naturally isomorphic to $\overline{M}_{h,1}$ where $h$ is the genus of the curve contracted to $0$.  However, the natural map of stacks $\rho:\mathcal{T}\rightarrow\M_{h,1}$ is degree $1/2$ because it forgets the double cover along with the nontrivial automorphism of the double cover.
\end{remark}

\begin{lemma}\label{vanish2}
If a component contracted to $\infty$ has positive genus and the node attaching it to the teardrop is twisted, then the contribution vanishes.
\end{lemma}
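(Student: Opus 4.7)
The plan is to compute the virtual localization contribution of a fixed locus $F$ where the genus-$g'>0$ component $C'$ is contracted to $\infty$ and attached to the teardrop $T$ by a twisted node, and to show that an obstructing Hodge factor on this contribution forces it to vanish. Up to automorphism factors, $F$ decomposes as a product whose relevant factor is $\M_{g';n_1'+1,n_2'}(\B\Z_2)$, where the ``$+1$'' accounts for the twisted node (so $n_1'$ must be odd for non-emptiness). I would isolate the piece of the integrand living on this factor.

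First, I would identify the $C'$-contribution to both the obstruction class and the virtual normal bundle. Since $\so(-1/2)$ carries the non-trivial $\Z_2$-character at the gerbe point $\infty$ and $f|_{C'}$ contracts to that point, we have $f^*\so(-1/2)|_{C'} \cong \so_{C'}\otimes\chi_{-1}$, so $H^0=0$ and $H^1 = \mathbb{E}_{-1}^\vee|_{C'}$. With linearizations $\pm 1/2$, the contribution to $e(-R^{\bullet}\pi_* f^*(\so(-1/2)\oplus\so(-1/2)))|_{C'}$ is $e(\mathbb{E}_{-1}^\vee(-1/2))\cdot e(\mathbb{E}_{-1}^\vee(1/2))$. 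By contrast, $T_{\G}$ at $\infty$ carries the trivial $\Z_2$-character (the stacky structure is a banding), so $f^*T_{\G}|_{C'}\cong\so_{C'}$ with equivariant weight $-1$, contributing a factor involving $e(\mathbb{E}_1^\vee(-1))$ to the moving part of $R^{\bullet}\pi_*f^*T_{\G}|_{C'}$.

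Second, I would invoke the $\Z_2$-Mumford relation $c_t(\mathbb{E}_{-1})\cdot c_t(\mathbb{E}_{-1}^\vee)=1$ on $\M_{g';n_1'+1,n_2'}(\B\Z_2)$. This follows from the classical Mumford relation $c(\mathbb{E}_{\tilde C})\cdot c(\mathbb{E}_{\tilde C}^\vee)=1$ on the étale double cover $\tilde C$, combined with the classical Mumford relation for $\mathbb{E}_1$ (which is pulled back from $\M_{g',n}$). Writing $\beta_i$ for the Chern roots of $\mathbb{E}_{-1}$, this gives $\prod_i(1-4\beta_i^2)=c_2(\mathbb{E}_{-1})\cdot c_2(\mathbb{E}_{-1}^\vee)=1$, and hence
\[
e(\mathbb{E}_{-1}^\vee(-1/2))\cdot e(\mathbb{E}_{-1}^\vee(1/2))=\prod_i\left(\beta_i^2-\tfrac{1}{4}\right)=\frac{(-1)^{\hat g}}{4^{\hat g}},
\]
a nonzero scalar in $H^0$. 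After this collapse, the integrand over $\M_{g';n_1'+1,n_2'}(\B\Z_2)$ consists only of the normal-bundle factor $e(\mathbb{E}_1^\vee(-1))^{\pm 1}$ together with $\psi$-class factors from the node smoothing and marked-point evaluations.

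Third, to conclude vanishing, I would combine the classical Mumford relation for $\mathbb{E}_1$ (which yields $\lambda_{g'}^2=0$ on the coarse factor $\M_{g',n}$) with a dimension count of the node-smoothing expansion. The essential point is that $e(\mathbb{E}_1^\vee(-1))=(-1)^{g'}c(\mathbb{E}_1)$ appears to top degree as $(-1)^{g'}\lambda_{g'}$, and the only psi classes available to match the remaining codimension come from the single twisted node; the resulting polynomial in $\psi$ falls short of the dimension of $\M_{g';n_1'+1,n_2'}(\B\Z_2)$ once the needed $\lambda_{g'}$ factor is extracted, forcing vanishing via $\lambda_{g'}^2=0$ in lower-genus reductions. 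The main obstacle is exactly this final bookkeeping: the $\Z_2$-Mumford step is transparent, but one must carefully match the equivariant weights and the available $\psi$-expansion against the virtual dimension to see that no term of the right degree survives.
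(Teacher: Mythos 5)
Your first two steps track the paper's own proof: you isolate the factor over $\infty$, and the $\Z_2$-Mumford relation does collapse $e(\mathbb{E}_{-1}^\vee(-1/2))\cdot e(\mathbb{E}_{-1}^\vee(1/2))$ to the scalar $(-1)^{\hat g}/4^{\hat g}$, which is exactly the coefficient $\frac{(-1)^{g_2-1+n/2}}{2^{2g_2-2+n}}$ appearing in the paper. The gap is in your third step. After the collapse the surviving integral is
\[
\int_{\M_{g_2;n,0}(\B\Z_2)}\frac{e^{eq}\left(\mathbb{E}_1^\vee(-1)\right)}{-1-\psi}
=\pm\sum_{i=0}^{g_2}(\pm 1)^i\int\lambda_i\,\psi^{3g_2-3+n-i},
\]
and nothing here ``falls short of the dimension'': the factor $\frac{1}{-1-\psi}$ supplies $\psi$ to every order, so each term $\lambda_i\psi^{3g_2-3+n-i}$ has exactly the top degree $3g_2-3+n$, and the individual terms are generically nonzero (the $\lambda_0$ term is $\pm\int\psi^{3g_2-3+n}$, which does not vanish in general). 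There is also no $\lambda_{g_2}^2$ anywhere --- $c(\mathbb{E}_1)$ occurs only to the first power --- so Mumford's $\lambda_g^2=0$ cannot be invoked. What is actually needed is the nontrivial vanishing of the full alternating sum for $g_2>0$, the $\Z_2$ analogue of Faber--Pandharipande's identity $\sum_i(-1)^i\int_{\M_{g,1}}\lambda_{g-i}\psi^{2g-2+i}=0$, i.e.\ the $k=-1$ specialization of their $\lambda_g\psi$ formula.

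The paper obtains this by a second localization: it introduces the auxiliary integral $J_{g_2,n}=\int_{\M_{g_2;n-1,0}(\G,1)}\prod ev_i^*\so(1)$, which vanishes for $g_2>0$ by a dimension count, and whose fixed-point expansion expresses the integral above as a sum over genus splittings $h_1+h_2=g_2$ with $h_1>0$ of products, each containing a factor $\int_{\M_{h_1;0,1}(\B\Z_2)}\frac{e^{eq}(\mathbb{E}_1^\vee(1))}{1-\psi}$; since $\mathbb{E}_1$ and $\psi$ there are pulled back from $\M_{h_1,1}$, that factor is a multiple of the coarse integral killed by Faber--Pandharipande at $k=-1$. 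Without this auxiliary relation (or some independent proof of the alternating-sum vanishing on the space of twisted curves), your argument does not close; the degree bookkeeping you propose cannot produce the vanishing, because every term already sits in the correct degree.
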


\begin{proof}
Consider a fixed locus with genus splitting $g_1+g_2=g$ and twisted node over $\infty$.  Then the node at $0$ must be untwisted, therefore by Lemma \ref{vanish1} all of the marks must be supported over $\infty$.  The contribution to (\ref{int}) from such a locus is given by
\[
\hspace{-1cm}-\int_{\mathcal{T}}\frac{e^{eq}\left( \mathbb{E}_{1}^{\vee}(1)\oplus\mathbb{E}_{-1}^{\vee}(-1)\oplus\mathbb{E}_{-1}^{\vee}(0) \right)}{1-\psi}\cdot\int_{\M_{g_2;n,0}(\B\Z_2)}\hspace{-.5cm}\frac{e^{eq}\left( \mathbb{E}_{1}^{\vee}(-1)\oplus\mathbb{E}_{-1}^{\vee}(-1/2)\oplus\mathbb{E}_{-1}^{\vee}(1/2) \right)}{-1-\psi}
\]

By the $\Z_2$ Mumford relation (\cite{bgp:crc}, Proposition 3.2), the second integral simplifies to
\begin{equation}\label{int3}
\frac{(-1)^{g_2-1+n/2}}{2^{2g_2-2+n}}\int_{\M_{g_2;n,0}(\B\Z_2)}\frac{e^{eq}\left( \mathbb{E}_{1}^{\vee}(-1) \right)}{-1-\psi}
\end{equation}

We show that this last integral vanishes if $g_2>0$.  To do this, consider the auxilary integral
\begin{equation}\label{int2}
J_{g_2,n}:=\int_{\M_{g_2;n-1,0}(\mathcal{G},1)}\prod_{i=1}^{n-1}ev_i^*\so(1).
\end{equation}
The integral vanishes by dimensional reasons so long as $g_2>0$.  Linearizing the bundles with weights $0$ at $0$ and $-1$ at $\infty$ forces all of the marks to $\infty$.  Therefore, the fixed loci are indexed by all the ways to split the genus $h_1+h_2=g_2$.  The contribution from such a fixed locus is
\[
-\int_{\M_{h_1;0,1}(\B\Z_2)}\frac{e^{eq}\left( \mathbb{E}_{1}^{\vee}(1) \right)}{1-\psi}\cdot\int_{\M_{h_2;n,0}(\B\Z_2)}\frac{e^{eq}\left( \mathbb{E}_{1}^{\vee}(-1) \right)}{-1-\psi}.
\]

The vanishing of $J_{g_2,n}$ then gives us the relation
\begin{equation}\label{rel}
\hspace{-1cm}\frac{1}{2}\int_{\M_{g_2;n,0}(\B\Z_2)}\frac{e^{eq}\left( \mathbb{E}_{1}^{\vee}(-1) \right)}{-1-\psi}=-\sum_{h_1>0}\int_{\M_{h_1;0,1}(\B\Z_2)}\frac{e^{eq}\left( \mathbb{E}_{1}^{\vee}(1) \right)}{1-\psi}\cdot\int_{\M_{h_2;n,0}(\B\Z_2)}\frac{e^{eq}\left( \mathbb{E}_{1}^{\vee}(-1) \right)}{-1-\psi}.
\end{equation}

If $h_1>0$, then
\[
\int_{\M_{h_1;0,1}(\B\Z_2)}\frac{e^{eq}\left( \mathbb{E}_{1}^{\vee}(1) \right)}{1-\psi}=\int_{\M_{h_1;0,1}(\B\Z_2)}\sum (-1)^{g-i}\lambda_{g-i}\psi^{2g-2+i}
\]
where the $\lambda$ and $\psi$ classes are pulled back via the natural map $\rho:\M_{h_1;0,1}(\B\Z_2)\rightarrow \M_{h_1,1}$.  Therefore, the degree of the top intersection on the space of twisted curves is $\deg(\rho)$ times the degree of the corresponding top intersection on $\M_{h_1,1}$.  But we know that the corresponding integrals on $\M_{h_1,1}$ vanish because they are obtained by substituting $k=-1$ in the Faber-Pandharipande (\cite{fp:hiagwt}) formula:
\[
1+\sum_{i,g}t^{2g}k^i\int_{\M_{g,1}}\lambda_{g-i}\psi^{2g-2+i}=\left( \frac{t/2}{\sin(t/2)} \right)^{k+1}.
\]

Therefore, the right hand side of (\ref{rel}) vanishes.  The left hand side is a multiple of (\ref{int3}), completing the proof.

\end{proof}

\begin{lemma}\label{vanish3}
If a component contracted to $\infty$ has positive genus and the node attaching it to the teardrop is untwisted, then the contribution vanishes.
\end{lemma}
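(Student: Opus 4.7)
The plan is to mirror the proof of Lemma \ref{vanish2}, exploiting the $0 \leftrightarrow \infty$ symmetry, with one key simplification: when the trivial cover locus appears on the $\infty$ side, the relevant integral lands on ordinary moduli space, so the Faber-Pandharipande formula at $k=-1$ can be invoked directly and no auxiliary integral analogous to $J_{g_2,n}$ is needed.

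First, I will apply Lemma \ref{vanish1} to the untwisted node over $\infty$: nonvanishing contribution forces the contraction over $\infty$ to parametrize the trivial \'etale double cover.  As in Remark \ref{remark}, this locus $\mathcal{T}_\infty$ comes with a degree-$1/2$ map $\rho:\mathcal{T}_\infty\to\M_{h,1}$, where $h$ is the genus of the contracted curve.  Following the template of Lemma \ref{vanish2}, the contribution of such a fixed locus will factor as an integral on the $0$-side (together with the teardrop) times
\[
\int_{\mathcal{T}_\infty}\frac{e^{eq}\left(\mathbb{E}_1^{\vee}(-1)\oplus\mathbb{E}_{-1}^{\vee}(-1/2)\oplus\mathbb{E}_{-1}^{\vee}(1/2)\right)}{-1-\psi}.
\]

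Next, I will apply the $\Z_2$-Mumford relation used in Lemma \ref{vanish2}, which reduces the numerator to a scalar multiple of $e^{eq}(\mathbb{E}_1^{\vee}(-1))$.  Crucially, on $\mathcal{T}_\infty$ the invariant Hodge bundle $\mathbb{E}_1$ is the pullback via $\rho$ of the ordinary Hodge bundle on $\M_{h,1}$, so the integral collapses to a nonzero rational multiple of
\[
\int_{\M_{h,1}}\frac{\Lambda_h^{\vee}(-1)}{-1-\psi}.
\]
Expanding $(-1-\psi)^{-1}$ as a geometric series in $\psi$ and using $\Lambda_h^{\vee}(-1)=(-1)^h\sum_j\lambda_j$, this is exactly the signed combination of one-point Hodge integrals $\int_{\M_{h,1}}\lambda_{h-i}\psi^{2h-2+i}$ obtained by substituting $k=-1$ into the Faber-Pandharipande formula recalled in the proof of Lemma \ref{vanish2}, and hence vanishes for $h>0$.

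The main obstacle is tracking the signs and normalization constants through the $\Z_2$-Mumford simplification on $\mathcal{T}_\infty$, since this locus parametrizes a \emph{disconnected} trivial double cover and the rank $\hat g$ governing the scalar $(-1/4)^{\hat g}$ behaves slightly differently than on the connected loci encountered in Lemma \ref{vanish2}.  Once this bookkeeping is in hand, the argument is genuinely shorter than that of Lemma \ref{vanish2}: the reduction lands directly on ordinary moduli and the Faber-Pandharipande vanishing finishes things off without requiring an analogue of the auxiliary integral $J_{g_2,n}$.
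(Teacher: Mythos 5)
There is a genuine gap, and it sits in your very first step. You apply Lemma \ref{vanish1} to the untwisted node over $\infty$ and conclude that the contraction over $\infty$ must parametrize the trivial double cover (so that, in particular, it carries no twisted marks). But the vanishing mechanism behind Lemma \ref{vanish1} is the weight-$0$ linearization of the second copy of $\so(-1/2)$, and with the chosen linearizations that zero weight sits over $0$, not over $\infty$ (at $\infty$ the three weights are $-1$, $-1/2$, $1/2$, all nonzero). Despite the wording of that lemma's statement, its proof and every other use of it in the paper (Remark \ref{remark}, the proof of Lemma \ref{vanish2}, and the final Proof of Formula) concern untwisted nodes over $0$. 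An untwisted node over $\infty$ forces nothing: the component contracted to $\infty$ may carry any even number $n_2$ of twisted marked points, and these are precisely the loci that Lemma \ref{vanish3} must dispose of. A sanity check: the genus-$0$ versions of these loci are the $\Gamma_k$ with $1\le k\le \frac{n-2}{2}$ in the final recursion, whose contributions $\frac{(-1)^{k}}{2^{2k}}\binom{n-1}{2k}F_{g,n-2k}$ are nonzero; if your reading of Lemma \ref{vanish1} were correct, these would vanish as well and the recursion, hence Theorem \ref{hhi}, would collapse.

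As a result, your claimed simplification --- that the $\infty$-factor lands on the trivial-cover locus, reduces to a multiple of $\int_{\M_{h,1}}\Lambda_h^{\vee}(-1)/(-1-\psi)$, and dies by the Faber--Pandharipande formula at $k=-1$ with no auxiliary integral --- only treats the subcase $n_2=0$ and misses the substance of the lemma. The paper instead keeps general even $n_2$, applies the $\Z_2$-Mumford relation to reduce the $\infty$-factor to a multiple of $\int_{\M_{g_2;n_2,1}(\B\Z_2)} e^{eq}\left(\mathbb{E}_1^{\vee}(-1)\right)/(-1-\psi)$, and then proves that this vanishes for $g_2>0$ by a second auxiliary localization, $J_{g_2,n_2+2}$, linearized so as to force one evaluation to $0$ and $n_2$ evaluations to $\infty$; the resulting relation (\ref{rel2}) writes the integral as a sum of products, each containing a factor already shown to vanish in the proof of Lemma \ref{vanish2}. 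So the auxiliary integral you hoped to avoid is still needed, or must be replaced by a different argument (for instance, pushing $\mathbb{E}_1$ forward to $\M_{g_2,n_2+1}$ and combining the string equation with Faber--Pandharipande), which you would have to supply.
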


\begin{proof}
Consider a fixed locus with genus and mark splitting $g_1+g_2=g$ and $n_1+n_2=n-1$ where $n_2$ is even, forcing the node over $\infty$ to be untwisted.  The contribution to (\ref{int}) from such a fixed locus is
\begin{align*}
\frac{1}{4}\int_{\M_{g_1;n_1,0}(\B\Z_2)}&\frac{e^{eq}\left( \mathbb{E}_{1}^{\vee}(1)\oplus\mathbb{E}_{-1}^{\vee}(-1)\oplus\mathbb{E}_{-1}^{\vee}(0) \right)}{1-\psi}\\
&\cdot\int_{\M_{g_2;n_2,1}(\B\Z_2)}\frac{e^{eq}\left( \mathbb{E}_{1}^{\vee}(-1)\oplus\mathbb{E}_{-1}^{\vee}(-1/2)\oplus\mathbb{E}_{-1}^{\vee}(1/2) \right)}{-1-\psi}
\end{align*}

Again by the $\Z_2$ Mumford relation, the second integral simplifies to
\begin{equation}\label{int4}
\frac{(-1)^{g_2-1+n_2/2}}{2^{2g_2-2+n_2}}\int_{\M_{g_2;n_2,1}(\B\Z_2)}\frac{e^{eq}\left( \mathbb{E}_{1}^{\vee}(-1) \right)}{-1-\psi}
\end{equation}

We show that this integral vanishes when $g_2>0$.  If $g_2>0$, consider again the vanishing integral
\begin{equation}\label{int5}
J_{g_2,n_2+2}=\int_{\M_{g_2;n_2+1,0}(\mathcal{G},1)}\prod_{i=1}^{n_2+1}ev_i^*\so(1)
\end{equation}
where we now linearize the bundles in order to force $n_2$ points to $\infty$ and $1$ point to $0$.  The fixed loci are again indexed by genus splitting $h_1+h_2=g_2$.  The contribution from such a splitting is
\[
\int_{\M_{h_1;2,0}(\B\Z_2)}\frac{e^{eq}\left( \mathbb{E}_{1}^{\vee}(1) \right)}{1-\psi}\cdot\int_{\M_{h_2;n_2,1}(\B\Z_2)}\frac{e^{eq}\left( \mathbb{E}_{1}^{\vee}(-1) \right)}{-1-\psi}.
\]

The vanishing of $J_{g_2,n_2+2}$ gives us the relation
\begin{equation}\label{rel2}
\hspace{-1cm}\frac{1}{2}\int_{\M_{g_2;n_2,1}(\B\Z_2)}\frac{e^{eq}\left( \mathbb{E}_{1}^{\vee}(-1) \right)}{-1-\psi}=-\sum_{h_1>0}\int_{\M_{h_1;2,0}(\B\Z_2)}\frac{e^{eq}\left( \mathbb{E}_{1}^{\vee}(1) \right)}{1-\psi}\cdot\int_{\M_{h_2;n_2,1}(\B\Z_2)}\frac{e^{eq}\left( \mathbb{E}_{1}^{\vee}(-1) \right)}{-1-\psi}.
\end{equation}

The integral 
\[
\int_{\M_{h_1;2,0}(\B\Z_2)}\frac{e^{eq}\left( \mathbb{E}_{1}^{\vee}(1) \right)}{1-\psi}
\]
is seen to be a multiple of (\ref{int3}) (where $g_2=h_1$ and $n=2$), and we know from the proof of Lemma \ref{vanish2} that it vanishes if $h_1>0$.  Thus, the right side of (\ref{rel2}) vanishes.  The left side is a multiple of (\ref{int4}), completing the proof.


\end{proof}

\subsection{Proof of Formula}

We are now ready to prove formula (\ref{ident}).  By Lemmas \ref{vanish2} and \ref{vanish3}, loci contributing nontrivially to the integral map all genus to $0$.  By Lemma \ref{vanish1}, the only contributing locus with untwisted node over $0$ is the one for which all marks go to $\infty$ and the contracting genus at $0$ parametrizes the trivial double cover.  We now describe the surviving fixed loci.  For $0\leq k\leq\frac{n-2}{2}$, let $\Gamma_k$ denote the fixed locus of degree $1$ maps into $\G$ where the teardrop is twisted over $0$, the component contracted to $0$ has $n-1-2k$ twisted marked points and genus $g$, and the component over $\infty$ is rational and has $2k$ twisted marked points.  Let $\Gamma_{\frac{n}{2}}$ denote the locus of maps where the teardrop is twisted over $\infty$, all marked points are on a rational component contracted to $\infty$, and the component contracted to $\infty$ has genus $g$ and no marked points.  

The contribution from $\Gamma_0$ is seen to be $F_{g,n}$.  The contribution from $\Gamma_k$ with $1\leq k\leq \frac{n-2}{2}$ is computed to be
\begin{align*}
\frac{1}{4}{n-1 \choose 2k}\int_{\M_{0;2k,1}(\B\Z_2)}&\frac{e^{eq}\left(\mathbb{E}_{-1}^{\vee}(-1/2)\oplus\mathbb{E}_{-1}^{\vee}(1/2) \right)}{-1-\psi}\cdot F_{g,n-2k}\\
&=\frac{(-1)^{k}}{2^{2k}}{n-1 \choose 2k}\cdot F_{g,n-2k}
\end{align*}
where the equality follows from applying the $\Z_2$ Mumford relation and evaluating the top power of the $\psi$ class that remains (this is simply the Hurwitz number $\frac{1}{2}$).  The contribution from $\Gamma_{\frac{n}{2}}$ is computed to be
\[
-\int_{\mathcal{T}}\frac{e^{eq}\left( \mathbb{E}_{1}^{\vee}(1)\oplus\mathbb{E}_{-1}^{\vee}(-1)\oplus\mathbb{E}_{-1}^{\vee}(0) \right)}{1-\psi}\cdot\int_{\M_{0;n,0}(\B\Z_2)}\frac{e^{eq}\left(\mathbb{E}_{-1}^{\vee}(-1/2)\oplus\mathbb{E}_{-1}^{\vee}(1/2) \right)}{-1-\psi}.
\]

On the locus $\mathcal{T}$, we have $\mathbb{E}_1\cong\mathbb{E}_{-1}$.  Therefore, we can apply the Mumford relation and the contribution of $\Gamma_{\frac{n}{2}}$ simplifies to
\[
-\frac{1}{2}b_g\int_{\M_{0;n,0}(\B\Z_2)}\frac{e^{eq}\left(\mathbb{E}_{-1}^{\vee}(-1/2)\oplus\mathbb{E}_{-1}^{\vee}(1/2) \right)}{-1-\psi}=\frac{(-1)^{\frac{n}{2}}}{2^{n-1}}b_g
\]
where 
\[
b_g:=\int_{\M_{g,1}}\lambda_g\psi^{2g-2}.
\]

Summing these contributions to $0$, we get the recursion
\[
F_{g,n}=-\sum_{k=1}^{\frac{n-2}{2}}\frac{(-1)^{k}}{2^{2k}}{n-1 \choose 2k}\cdot F_{g,n-2k}-\frac{(-1)^{\frac{n}{2}}}{2^{n-1}}b_g.
\]
Inserting a formal variable $x$, we get
\[
F_{g,n}\frac{x^{n-1}}{(n-1)!}=-\sum_{k=1}^{\frac{n-2}{2}}\left[\frac{(-1)^{k}}{2^{2k}}\frac{x^{2k}}{(2k)!}\right]\cdot\left[F_{g,n-2k}\frac{x^{n-2k-1}}{(n-2k-1)!}\right]-\frac{(-1)^{\frac{n}{2}}}{2^{n-1}}\frac{x^{n-1}}{(n-1)!}b_g.
\]
Setting 
\[
F_g(x):=\sum_{n\geq 2}F_{g,n}\frac{x^{n-1}}{(n-1)!}
\]
we get
\[
F_g(x)=[-\cos\left(\frac{x}{2}\right)+1]F_g(x)+\sin\left(\frac{x}{2}\right)b_g.
\]
We solve $F_g(x)=\tan\left(\frac{x}{2}\right)b_g$.  The identity (\ref{ident}) now follows from the fact that
\[
\sum_{g\geq 0}b_g\lambda^{2g-1}=\frac{1}{2}\csc\left(\frac{\lambda}{2}\right).
\]

\bibliographystyle{alpha}
\bibliography{biblio}

\newcommand{\etalchar}[1]{$^{#1}$}
\begin{thebibliography}{MNOP06b}

\bibitem[AKMV05]{akmv:tv}
Mina Aganagic, Albrecht Klemm, Marcos Mari{\~n}o, and Cumrun Vafa.
\newblock The topological vertex.
\newblock {\em Comm. Math. Phys.}, 254(2):425--478, 2005.

\bibitem[BC11]{bc:ooinv}
Andrea Brini and Renzo Cavalieri.
\newblock Open orbifold gromov-witten invariants of {$\C^3/\Z_n$}: localization
  and mirror symmetry.
\newblock {\em Selecta Mathematica, New Series}, 17:879--933, 2011.
\newblock 10.1007/s00029-011-0060-4.

\bibitem[BCY10]{bcy:otv}
J.~Bryan, C.~Cadman, and B.~Young.
\newblock The orbifold topological vertex.
\newblock Preprint: math/1008.4205v1, 2010.

\bibitem[BG09]{bg:crc}
J.~Bryan and T.~Graber.
\newblock The crepant resolution conjecture.
\newblock {\em Proc. Sympos. Pure Math.}, 80:23--42, 2009.

\bibitem[BGP08]{bgp:crc}
J.~Bryan, T.~Graber, and R.~Pandharipande.
\newblock The orbifold quantum cohomology of {$\Bbb C\sp 2/Z\sb 3$} and
  {H}urwitz-{H}odge integrals.
\newblock {\em J. Algebraic Geom.}, 17(1):1--28, 2008.
\newblock arXiv:math.AG/0510335.

\bibitem[CC09]{cc:gl}
C.~Cadman and R.~Cavalieri.
\newblock Gerby localization, {$Z_3$}-{H}odge integrals and the {GW} theory of
  {$[\Bbb C^3/Z_3]$}.
\newblock {\em Amer. J. Math.}, 131(4):1009--1046, 2009.

\bibitem[CR04]{cr:nctoo}
W.~Chen and Y.~Ruan.
\newblock A new cohomology theory of orbifolds.
\newblock {\em Comm. Math. Phys.}, 248(1):1--31, 2004.

\bibitem[CR07]{cr:qcacr}
Tom Coates and Yongbin Ruan.
\newblock Quantum cohomology and crepant resolutions: A conjecture, 2007.

\bibitem[CR11]{cr:ogwcrc}
R.~Cavalieri and D.~Ross.
\newblock Open gromov-witten theory and the crepant resolution conjecture.
\newblock Preprint: math/1102.0717v1, 2011.

\bibitem[DF05]{df:lgta}
Duiliu-Emanuel Diaconescu and Bogdan Florea.
\newblock Localization and gluing of topological amplitudes.
\newblock {\em COMMUN.MATH.PHYS.}, 257:119, 2005.

\bibitem[FMN07]{fmn:stdms}
B.~Fantechi, E.~Mann, and F.~Nironi.
\newblock Smooth toric dm stacks.
\newblock {\em J. Reine Angew. Math.}, 2007.

\bibitem[FP00]{fp:hiagwt}
Carel Faber and Rahul Pandharipande.
\newblock {Hodge} integrals and {Gromov-Witten} theory.
\newblock {\em Invent. Math.}, 139(1):173--199, 2000.

\bibitem[GP99]{Graber-Pandharipande}
T.~Graber and R.~Pandharipande.
\newblock Localization of virtual classes.
\newblock {\em Invent. Math.}, 135(2):487--518, 1999.

\bibitem[HKK{\etalchar{+}}03]{clay:ms}
Kentaro Hori, Sheldon Katz, Albrecht Klemm, Rahul Pandharipande, Richard
  Thomas, Cumrun Vafa, Ravi Vakil, and Eric Zaslow.
\newblock {\em Mirror Symmetry}.
\newblock AMS CMI, 2003.

\bibitem[Joh09]{j:egwtods}
P.~Johnson.
\newblock Equivariant gromov-witten theory of one dimensional stacks.
\newblock Preprint: math/0903.1068v1, 2009.

\bibitem[JPT08]{jpt:ahhi}
P.~Johnson, R.~Pandharipande, and H.-H. Tseng.
\newblock Abelian hurwitz-hodge integrals.
\newblock Preprint: math/0803.0499v2, 2008.

\bibitem[KL02]{kl:oinv}
S.~Katz and M.~Liu.
\newblock Enumerative geometry of stable maps with lagrangian boundary
  conditions and multiple covers of the disk.
\newblock {\em Adv. Theor. Math. Phys.}, 5:1--49, 2002.

\bibitem[LLLZ09]{lllz:mttv}
Jun Li, Chiu-Chu~Melissa Liu, Kefeng Liu, and Jian Zhou.
\newblock A mathematical theory of the topological vertex.
\newblock {\em GEOM.TOPOL.}, 13:527, 2009.

\bibitem[Mac95]{m:sfhp}
I.~G. Macdonald.
\newblock {\em Symmetric functions and {H}all polynomials}.
\newblock Oxford Mathematical Monographs. The Clarendon Press Oxford University
  Press, New York, second edition, 1995.
\newblock With contributions by A. Zelevinsky, Oxford Science Publications.

\bibitem[MNOP06a]{mnop:gwdt1}
D.~Maulik, N.~Nekrasov, A.~Okounkov, and R.~Pandharipande.
\newblock Gromov-{W}itten theory and {D}onaldson-{T}homas theory. {I}.
\newblock {\em Compos. Math.}, 142(5):1263--1285, 2006.

\bibitem[MNOP06b]{mnop:gwdt2}
D.~Maulik, N.~Nekrasov, A.~Okounkov, and R.~Pandharipande.
\newblock Gromov-{W}itten theory and {D}onaldson-{T}homas theory. {II}.
\newblock {\em Compos. Math.}, 142(5):1286--1304, 2006.

\bibitem[MOOP11]{moop:gwdtc}
D.~Maulik, A.~Oblomkov, A.~Okounkov, and R.~Pandharipande.
\newblock Gromov-witten/donaldson-thomas correspondence for toric 3-folds.
\newblock {\em Inventiones Mathematicae}, 186:435--479, 2011.
\newblock 10.1007/s00222-011-0322-y.

\bibitem[ORV06]{orv:qcycc}
Andrei Okounkov, Nikolai Reshetikhin, and Cumrun Vafa.
\newblock Quantum {C}alabi-{Y}au and classical crystals.
\newblock In {\em The unity of mathematics}, volume 244 of {\em Progr. Math.},
  pages 597--618. Birkh\"auser Boston, Boston, MA, 2006.

\bibitem[Zon11]{z:gmvf}
Z.~Zong.
\newblock Generalized {M}ari{\~n}o-{V}afa formula and local {G}romov-{W}itten
  theory of orbi-curves.
\newblock Preprint: math/1109.4992v1, 2011.

\end{thebibliography}

\end{document}